\pdfoutput=1
\documentclass[12pt]{amsart}
\usepackage{amssymb, amsmath, amsthm}
\setlength{\voffset}{-1cm} \setlength{\hoffset}{-1.37cm}
\setlength{\textwidth}{6.3in} \setlength{\textheight}{8.4in}
\setlength{\abovedisplayshortskip}{3mm}
\setlength{\belowdisplayshortskip}{3mm}

\vskip.1in

\usepackage{latexsym, amsmath, bm, amssymb, longtable, booktabs,amscd,microtype,booktabs,cases}
\usepackage[square,numbers]{natbib}
\usepackage{graphicx}
\usepackage[dvipsnames]{xcolor}
\usepackage{caption}
\usepackage{dsfont}
\usepackage[all]{xy}
\usepackage{enumerate}
\usepackage{amsthm}
\usepackage{multirow}
\usepackage{tikz}
\usetikzlibrary{matrix,arrows,decorations.pathmorphing}
\usepackage{collectbox}
\usepackage{enumerate}
\usepackage[colorinlistoftodos,prependcaption, textsize=tiny]{todonotes}
\setlength{\marginparwidth}{3cm}\reversemarginpar
\usepackage{amsmath}
\usepackage{enumitem}
\setlength{\parskip}{0.15em}

\usepackage{hyperref}
\setcounter{tocdepth}{3}

\usepackage{longtable}
\theoremstyle{plain}
\newtheorem{theorem}{Theorem}

\newtheorem{corollary}{Corollary}[section]
\newtheorem{proposition}{Proposition}[section]

\theoremstyle{definition}

\newtheorem{definition}{Definition}[section]

\theoremstyle{remark}
\newtheorem{remark}{Remark}[section]

\newcommand{\Z}{\mathbb{Z}}
\newcommand{\la}{\lambda}

\newcommand{\de}{\delta}
\newcommand{\C}{\mathbb{C}}
\newcommand{\dis}{\displaystyle}
\newcommand{\bgop}{\bigoplus}
\newcommand{\ot}{\otimes}
\newcommand{\al}{\alpha}
\newcommand{\mcal}{\mathcal}
\newcommand{\be}{\beta}

\newcommand{\mf}{\mathfrak}
\newcommand{\bs}{\boldsymbol}
\newcommand{\N}{\mathbb{N}}

\newcommand{\op}{\oplus}
\newcommand{\wtil}{\widetilde}
\newcommand{\ov}{\overline}
\usepackage{graphicx}

\begin{document} 
	\title[]{Irreducible modules for map Heisenberg-Virasoro Lie algebras}
	\author[]{ Priyanshu Chakraborty}
	\address{Priyanshu Chakraborty:School of Mathematical Sciences, Ministry of Education Key Laboratory of Mathematics and Engineering Applications and Shanghai Key Laboratory of PMMP,
		East China Normal University, No. 500 Dongchuan Rd., Shanghai 200241, China.}
	
	\email{priyanshu@math.ecnu.edu.cn,        priyanshuc437@.gmail.com}
	
	\keywords{Virasoro algebra, Heisenberg-Virasoro algebra, Loop algebra, Harish-Chandra module}
	\subjclass [2010]{17B65, 17B66, 17B67}
	\maketitle
	\begin{abstract} 
		
		We study irreducible modules for map Heisenberg-Virasoro algebras. In particular, we give a complete classification of irreducible Harish-Chandra modules for map Heisenberg-Virasoro algebras. We will also classify non-weight irreducible modules for map Heisenberg-Virasoro algebras whose restriction on the degree zero part of the universal enveloping algebra of Witt algebra is free of rank 1. 
	\end{abstract}
	{\bf{Notations:}} 
	\begin{itemize}
		\item Throughout this paper we will work with the base field $\C$.
		\item Let $\C, \mathbb{R}, \Z, \Z_{\geq 0}$ denote the set of complex numbers, set of real numbers, set of integers and set of all non-negative integers respectively. 
		\item Let $\N$ denote set of positive integers. For $n \in \N$, $\C^n = \{(x_1, \ldots , x_n) : x_i \in \C, 1 \leq i \leq n\}$ and $\Z_{\geq 0}^{n}$ are defined similarly. 
		\item  Elements of $\C^n $ and $\Z_{\geq 0}^{n}$ are written in boldface.
		\item For any Lie algebra $\mathfrak{g}$, $U(\mathfrak{g})$ will denote universal enveloping algebra of $\mathfrak{g}$.
		
	\end{itemize}
	\section{Introduction} 
	The Lie algebra of polynomial vector fields on $S^1$ is known as Witt algebra. The Witt algebra Der$\C[t^{\pm 1}]$ is an infinite dimensional Lie algebra with basis $d_n=t^{n+1}\frac{d}{dt},$ $\forall n \in \Z$ with the bracket operation $[d_n,d_m]=(m-n)d_{m+n}$. The universal central extension of Witt algebra is known as Virasoro algebra, we denote it by $Vir$. The Virasoro algebra $Vir= $ Der$\C[t^{\pm 1}] \op \C C$ is a Lie algebra with the bracket operation
	$$ [d_n , d_m ] = (m - n)d_{n+m} + \de_{n,-m}\frac{n^3-n}{12}C, \hspace{1cm} [d_n,C]=0, \forall m,n \in \Z. $$
	Harish-Chandra modules for Virasoro Lie algebras are extensively studied, for instance see \cite{MP,OM,CP,MP2,LGZ} and references therein.\\
	The Virasoro Lie algebra and its related Lie algebras have been widely used in several branches of mathematics and physics, for example string theory \cite{MJ}, Conformal field theory \cite{GO}, modular form \cite{KP}, Kac-Moody Lie algebras \cite{VK}, vertex operator algebras and so on.\\
	We concentrate on the Virasoro related Lie algebra called Heisenberg-Virasoro Lie algebra, which is the universal central extension of the Lie algebra $\{f(t)\frac{d}{dt}+g(t):f(t),g(t) \in \C[t^{\pm 1}]\}$ of differential operators of degree at most one, see section 2 for details. Representation of  Heisenberg-Virasoro Lie algebra have been studied in \cite{LZHV,LGZ,CG,CGHW,YB} and references therein. In particular in \cite{LZHV} authors classified irreducible Harish-Chandra modules for Heisenberg-Virasoro Lie algebras and in \cite{CGHW} authors classified irreducible modules for Heisenberg-Virasoro algebras which are free $U(d_0)$-modules of rank 1.\\
	In this paper our aim to study irreducible modules for map Heisenberg-Virasoro algebras. Let $\mf L$ be a Lie algebra and $Z$ be a affine scheme of finite type. Then the Lie algebra of all regular maps from $Z$ to $\mf L$ called the map algebra associated with $Z$ and $\mf L$ and it is isomorphic to the Lie algebra $\mf L \ot B$, where $B=\mcal O_Z$. In recent days study of modules for map algebras gaining interest, for instance see \cite{GLZ,CR,SCPR,RB,CB,RSM}.\\
	In this paper we classify irreducible Harish-Chandra modules for map Heisenberg-Virasoro algebras. In particular we prove that irreducible Harish-Chandra modules are either single point evaluation modules corresponding to Heisenberg-Virasoro modules of intermediate series or a finite tensor product of generalized single point evaluation modules corresponding to highest weight (or lowest weight) modules. To classify irreducible Harish-Chandra modules we used many idea from \cite{1} but arguments are different. On the other hand we classify non-weight irreducibles for map Heisenberg-Virasoro algebras which are free of rank 1 over $U(d_0)$. It should mention here that this kind of free modules were first introduced and studied in \cite{LZ} using fractional representation of $Vir$. Later on this kind of non-weights modules have been studied by several authors and called them $U(h)$-free modules, for instance see \cite{CGHW,CYY,JN,JN2,TZ}. In particular, in \cite{CGHW} authors classified $U(d_0)$ free modules of rank 1 for Heisenberg-Virasoro algebra and in \cite{CYY} authors classified $U(d_0)$ free modules of rank 1 for loop-Virasoro algebra. In this paper we generalize the results of \cite{CGHW}, see section 5.\\
	The  paper is organized as follows. In section 2 we review some important definitions related to map algebras. Furthermore we introduce Verma module for Heisenberg-Virasoro Lie algebra. In section 3 we classify irreducible uniformly bounded modules for map Heisenberg-Virasoro Lie algebras. In section 4 we prove that irreducible Harish-Chandra modules are either highest weight or lowest weight or uniformly bounded modules. Moreover we find a necessary sufficient condition for irreducible highest weight modules to be Harish-Chandra module. In section 5 we classify irreducible non-weight modules for map Heisenberg-Virasoro which are free of rank 1 over $U(d_0)$. Finally we find the isomorphism classes of modules for map Heisenberg-Virasoro algebras which are free of rank 1 over $U(d_0)$.   
	\section{ Preliminaries}
	\begin{definition}
		For a Lie algebra $\mathfrak{g}$ and a commutative associative algebra $B$ we define a Lie algebra $\mf{g} \ot B$ by:
		$$ [g_1\ot b_1,g_2\ot b_2]=[g_1,g_2]\ot b_1b_2 $$
		for all $g_1,g_2 \in \mf g, b_1,b_2 \in B$. We call this algebra as map algebra associated to $\mf g $ and $B$.
		
	\end{definition}
	\begin{definition}
		For a $\mf{g}\ot B$ module $V$, we define 
		\begin{center}
			$Ann_V=\{ f \in B: \mf g \otimes f.V=0 \}  $ and \\
			$Supp_V=$ $\{\bf{m} \in $Spec $B: Ann_V\subseteq \bf{m}\}$.
		\end{center}
		We call $V$ have finite support if $Supp_V$ is finite.
	\end{definition}
	\begin{definition}({\bf Evaluation module})
		Let $B$ be a commutative associative unital finitely generated algebra over $\C$. Let $V$ be a $\mathfrak{g} \otimes B$-module. We call $V$ is a single point evaluation $\mathfrak{g} \otimes B$-module if
		there exists an algebra homomorphism $\eta:B \mapsto \C$ such that $x \otimes b.(v) = \eta(b)(x\otimes1).v$ for all $x \in \mathfrak{g}, b \in B$ and $v \in V$. Equivalently, if $(V, \rho)$ is a $\mathfrak{g} \otimes B$-representation, then $V$ is a single 
		point evaluation module if the Lie algebra homomorphism $\rho: \mathfrak{g} \otimes B \rightarrow \mathrm{End}(V)$ factors through $\mathfrak{g} \otimes B/ \mathfrak{g} \otimes \mathfrak{m} \cong \mathfrak{g} \otimes B/ \mathfrak{m}$, where $\mathfrak{m}$ is a maximal ideal of $B$.\\
		At the same time, we call $V$ is a single point generalized evaluation module if the map $\rho: \mathfrak{g} \otimes B \rightarrow \mathrm{End}(V)$ factors through $\mathfrak{g} \otimes B/ \mathfrak{g} \otimes \mathfrak{m}^k \cong \mathfrak{g} \otimes B/ \mathfrak{m}^k$ for some $k >1$ and $k \in \N$, where $\mathfrak{m}$ is a maximal ideal of $B$.
	\end{definition}
	
	{\bf Map Heisenberg-Virasoro algebra:}\\
	Consider the Lie algebra with basis $\{d_n , I_n, C , C_D , C_I |n \in \Z \}$ and following bracket operations:
	\begin{align}
		[d_n , d_m ] = (m - n)d_{n+m} + \de_{n,-m}\frac{n^3-n}{12}C\\
		[d_n,I_m]=mI_{m+n}+\de_{n,-m}(n^2+n)C_D\\
		[I_n,I_m]=n\de_{n,-m}C_I\\
		[C,d_n]=[C,I_n]=[C_D,d_n]=[C_D,I_n]=[C_I,d_n]=[C_I,I_n]=0,
	\end{align}
	for all $m,n \in \Z$. This Lie algebra is known as Heisenberg-Virasoro Lie algebra. We denote this Lie algebra as $\mathcal{HV}.$  It is easy to observe that $\mathcal{C}=span\{C,C_D,C_I,I_0\}$ is the center of $\mcal {HV}$. Note that the subalgebra spanned by $\{d_n,C : n \in \Z\}$ is the classical Virasoro algebra. We denote Virasoro algebra by $Vir$.\\
	For a commutative associative finitely generated unital algebra $B$, we consider the Lie algebra $\mcal{L_B}=\mcal{HV}\otimes B$. For a $\mcal{L_B}$-module $V$ and $(\la,\la_{I_0},\la_{C},\la_{C_I},\la_{C_D}) \in \C^5$ we define:
	$$V_{(\la,\la_{I_0},\la_{C},\la_{C_I},\la_{C_D})}=\{v\in V:d_0.v=\la v,I_0.v=\la_{I_0}v,C.v= \la_{C}v ,C_I.v=\la_{C_I}v,C_D.v= \la_{C_D}v \}$$ and call $V_{(\la,\la_{I_0},\la_{C},\la_{C_I},\la_{C_D})}$ as weight space of $V$ of weight $(\la,\la_{I_0},\la_{C},\la_{C_I},\la_{C_D})$. We call a module $V$ for $\mcal{L_B}$ as weight module if it is a direct sum of its weight spaces. Note that when $V$ is irreducible Harish-Chandra module, then $\mcal C$ acts as scalar on $V$ and hence we can denote weight spaces just by $V_\la$.\\
	Let us define $$\mcal{HV}^+=span\{d_i,I_i:i >0\}, \hspace{.4cm} \mcal{HV}^0=span\{d_0,I_0,C,C_I,C_D\}, \hspace{.4cm} \mcal{HV}^-=span\{d_i,I_i:i <0\}.$$
	Then $\mcal{HV}= \mcal{HV}^- \op \mcal{HV}^0 \op  \mcal{HV}^+$ gives a triangular decomposition of $\mcal{HV}$. Moreover $\mcal{HV}$ is $\Z$-graded given by the following gradation
	$$\mcal{HV}=\dis{\bigoplus_{i \in \Z}}{\mcal {HV}_i}, \hspace{1cm}  {\mcal{HV}_i}=span\{d_i,I_i \}, i \neq 0,\hspace{1cm} {\mcal {HV}_0}=\mcal{HV}^0.  $$
	Now it is clear that $\mcal{L_B}$ is $\Z$-graded and $\mcal{L_B}=\mcal{L_B}^- \op \mcal{L_B}^0 \op \mcal{L_B}^+,$ where $\mcal{L_B}^{\pm}=\mcal{HV}^{\pm} \ot B$ and $\mcal{L_B}^0=\mcal{HV}^0 \ot B$ gives a triangular decomposition of $\mcal{L_B}$.
	\begin{definition}
		A module $V$ for $\mcal{L_B}$ is said to be a highest weight (respectively lowest weight) module if there exists a non-zero weight vector $v \in V$ such that $\mcal{L_B}^+.v=0$ (respectively $\mcal{L_B}^-.v=0$) and $U(\mcal{L_B}).v=V.$
	\end{definition}
	\begin{definition}({\bf Verma module})
		Let $\phi: \mcal{L_B}^0 \to \mathbb C$ be a one dimensional representation of $\mcal{L_B}^0$. Let $\eta$ be a basis vector of this one dimensional representation.
		Extend $\mathbb C \phi= span\{\eta\}$ to a module for $ \mcal{L_B}^0 \op \mcal{L_B}^+$ by defining action of $\mcal{L_B}^+$ as zero on $\mathbb C\phi$. Then define the Verma module,
		$$M(\phi)=U(\mcal{L_B})\bigotimes_{U(\mcal{L_B}^0 \oplus \mcal{L_B}^+)}\mathbb C \phi.  $$  
		Clearly $M(\phi)$ is a highest weight module with highest weight $(\phi(d_0),\phi(I_0),\phi(C),\phi(C_I),\phi(C_D))$.
		Let $N(\phi)$ be the unique maximal proper sub-module of $M(\phi)$. Then
		$V(\phi)=M(\phi)/N(\phi)  $ is a highest weight irreducible module for $\mcal{L_B}$. Let $v_\phi$ be the highest weight vector of $V(\phi)$.
	\end{definition}
	
	\section{Uniformly bounded modules}
	In this section we classify irreducible uniformly bounded modules for $\mcal{L_B}$. By uniformly bounded modules we mean that dimensions of weight spaces are bounded by some natural number. Before going to uniformly bounded modules for $\mcal{L_B}$ we state the classification result for uniformly bounded modules of $\mcal{HV}$ from \cite{LZHV}.\\
	Let $V(\al,\be,F)$ ($\al,\be,F \in \C$)  be the vector space with basis $\{v_{\al+n}:n \in \Z\}$ and actions of $\mcal{HV}$ on $V(\al,\be,F)$ are defined by:
	\begin{align}
		d_i.v_{\al+k}=(\al+k+\be i)v_{\al+k+i},\\
		I_i.v_{\al+k}=Fv_{\al+k+i},\\
		C.v_{\al+k}=	C_I.v_{\al+k} =	C_D.v_{\al+k}=0.
	\end{align}
	It is well known that $V(\al,\be,F)$ is reducible iff $F=0$, $\al \in \Z$ and $\be =0,1$. Denote the unique (isomorphic)
	non-trivial subquotient module of $V(0,0,0)$ and $V(0,1,0)$ by $V'(0,0,0)$. These modules are known as modules of intermediate series for $\mcal{HV}$.
	\begin{theorem}(\cite{LZHV})
		Let $V$ be a uniformly bounded irreducible module for $\mcal{HV}$. Then $V \simeq V(\al,\be,F)$ or $V'(0,0,0)$. 
	\end{theorem}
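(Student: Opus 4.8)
The plan is to adapt the strategy used for the Virasoro algebra (Mathieu's theorem on bounded modules) to the enlarged algebra $\mcal{HV}$, proceeding in three stages: normalising the central data, forcing the three central charges to vanish, and finally reducing all weight multiplicities to at most one. Since $V$ is irreducible, Schur's lemma makes every element of the centre $\mcal C=\mathrm{span}\{C,C_D,C_I,I_0\}$ act as a scalar; write these scalars as $\la_C,\la_{C_D},\la_{C_I}$ and $F$ respectively. As an irreducible weight module over the $\Z$-graded algebra $\mcal{HV}$, the support of $V$ lies in a single coset $\al+\Z$ for some $\al\in\C$, and uniform boundedness gives a uniform bound $d$ on $\dim V_{\al+k}$. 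The goal is to prove $d\le 1$ and then to read off the action.

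First I would show $\la_C=\la_{C_I}=\la_{C_D}=0$. Restricting $V$ to the Virasoro subalgebra $Vir=\mathrm{span}\{d_n,C\}$ yields a uniformly bounded $Vir$-module; every composition factor of such a module is an intermediate series module with trivial central charge, and since $C$ acts by the single scalar $\la_C$, we get $\la_C=0$. For $\la_{C_I}$, observe that $\mathrm{span}\{I_n:n\in\Z\}\oplus\C C_I$ is an infinite-dimensional Heisenberg algebra; if $\la_{C_I}\neq 0$ the canonical commutation relations $[I_n,I_{-n}]=n\la_{C_I}$ produce, via the operators $I_{-n}I_n$ on a fixed weight space, multiplicities tending to infinity, contradicting boundedness. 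Finally, applying the relation $[d_n,I_{-n}]=-nI_0+(n^2+n)C_D$ as an endomorphism of the finite-dimensional space $V_{\la}$ and comparing traces across the chain of weight spaces $\{\la+kn\}$ gives a telescoping identity whose only bounded solution forces $\la_{C_D}=0$.

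With all three charges vanishing the elements $I_n$ commute among themselves, and $V$ becomes a bounded module over the resulting \emph{untwisted} Heisenberg--Virasoro algebra. The main obstacle is the multiplicity-one reduction: I would argue that, as a bounded $Vir$-module with trivial central charge, every composition factor of $V$ is a module of intermediate series (the Heisenberg--Virasoro analogue of Mathieu's theorem), so that each generic $d_i$ restricts to an isomorphism between generic weight spaces. The commuting operators $I_n$ then intertwine these Virasoro structures and, since $V$ is irreducible, cannot split $V$ into independent blocks; a careful analysis of the simultaneous $\{d_i\}$- and $\{I_i\}$-action shows that the common multiplicity must be $1$. This step -- controlling the Virasoro and the abelianised Heisenberg actions simultaneously -- is where the real work lies.

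Once $d=1$, each weight space $V_{\al+k}$ is spanned by a single vector $v_{\al+k}$, and the bracket relations pin down the action completely. The Virasoro intermediate-series structure forces $d_i.v_{\al+k}=(\al+k+\be i)v_{\al+k+i}$ for a constant $\be$, and feeding this into $[d_i,I_j]=jI_{i+j}$ together with $I_0=F$ forces $I_i.v_{\al+k}=F\,v_{\al+k+i}$. Hence $V\cong V(\al,\be,F)$. The exceptional cases occur precisely when this module is reducible, i.e.\ when $F=0$, $\al\in\Z$, and $\be\in\{0,1\}$; the only new irreducible obtained there is the common nontrivial subquotient $V'(0,0,0)$, which completes the classification.
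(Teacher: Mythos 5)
First, note that the paper itself contains no proof of this statement: it is quoted from L\"u--Zhao \cite{LZHV} as known input, so your attempt has to be measured against that paper's argument, which is a substantial classification in its own right. Your outline has the right overall shape (make the centre act by scalars, kill the three central charges, reduce all weight multiplicities to one, then read off the action), and the first step ($\lambda_C=0$ via Mathieu's theorem) is fine. But the heart of the theorem is exactly the multiplicity-one reduction, and you do not prove it: you write that a ``careful analysis of the simultaneous $\{d_i\}$- and $\{I_i\}$-action shows that the common multiplicity must be $1$'' and concede this is ``where the real work lies.'' That step is not a technicality that can be deferred --- it is the mathematical content of the theorem, and nothing in your sketch indicates how the commuting $I_n$'s interacting with the intermediate-series $Vir$-structure actually force $\dim V_{\alpha+k}\le 1$ rather than, say, several intertwined intermediate-series layers of multiplicity $d>1$. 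As it stands, the proposal is a plan, not a proof.

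Second, two of your central-charge arguments are flawed as written, not merely compressed. For $\lambda_{C_D}$: set $\varphi(\lambda)=\mathrm{tr}\bigl(I_{-n}d_n|_{V_\lambda}\bigr)$; the relation $[d_n,I_{-n}]=-nI_0+(n^2+n)C_D$ together with cyclicity of the trace gives precisely your telescoping identity $\varphi(\lambda-n)-\varphi(\lambda)=\bigl(-nF+(n^2+n)\lambda_{C_D}\bigr)\dim V_\lambda$, but the conclusion ``the only bounded solution forces $\lambda_{C_D}=0$'' is unjustified, because $\varphi$ need not be bounded even on a uniformly bounded module: on $V(\alpha,\beta,F)$ with $F\neq 0$ one computes $\varphi(\alpha+k)=F(\alpha+k+\beta n)$, which grows linearly in $k$, and the identity holds there with a nonzero right-hand side. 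Indeed, the same reasoning would equally ``prove'' $F=0$, which is false; uniform boundedness of $V$ bounds the dimensions of weight spaces, not traces of products of the (unbounded) operators $d_n$ and $I_{-n}$, so this step yields nothing. For $\lambda_{C_I}$: the operators $I_{-n}I_n$ preserve each weight space, so by themselves they cannot ``produce multiplicities tending to infinity.'' To get a contradiction one needs the whole commuting family $\{I_{-n}I_n\}_{n>0}$, a joint eigenvector $v$ in some $V_\mu$, and the monomials $\prod_{m\in S}I_{-m}\,v$ over sets $S$ of distinct positive integers with fixed sum $k$: these have pairwise distinct joint eigenvalue tuples (the $N_m$-eigenvalue shifts by $m\lambda_{C_I}\neq0$ exactly when $m\in S$), hence are linearly independent once one checks they are nonzero, and their number --- the number of partitions of $k$ into distinct parts --- is unbounded in $k$. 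That argument can be completed, but it is not the one you wrote.
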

	Now we proceed further towards classify irreducible uniformly bounded modules for $\mcal{L_B}$. We start with the following.
	\begin{theorem}\label{t3.1}
		Let $V$ be a non-trivial uniformly bounded irreducible module for $\mcal{L_B}$. Then there exists a cofinite ideal $J$ of $B$ such that $\mcal{HV}\otimes J.V=0.$ In particular, uniformly bounded $\mcal{HV} \otimes B$ modules have finite support.
	\end{theorem}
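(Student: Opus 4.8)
The plan is to exhibit a cofinite ideal of $B$ annihilating $V$ (after tensoring with $\mcal{HV}$), via structural reductions, a localization to each weight producing cofinite ideals, and a propagation argument that collapses the resulting infinite intersection to a finite one.

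First I would record the structural reductions. As $V$ is irreducible with weight support inside a single coset of $\Z$, it has countable dimension, so a Schur/Dixmier argument shows that the central subalgebras $C\ot B,\,C_I\ot B,\,C_D\ot B,\,I_0\ot B$ act by scalars. Since $\mcal{HV}$ is perfect, writing $x\ot fg=[y\ot f,z\ot g]$ whenever $x=[y,z]$ shows immediately that $Ann_V$ is an ideal of $B$. The identities $d_n\ot f=\frac1n[d_0\ot f,d_n\ot 1]$ and $I_n\ot f=\frac1n[d_0\ot f,I_n\ot 1]$ for $n\neq 0$, together with $I_0\ot f=[d_{-1}\ot f,I_1\ot 1]$, $\frac{n^3-n}{12}\,C\ot f=[d_n\ot f,d_{-n}\ot 1]+2n\,d_0\ot f$, and their analogues for $C_I,C_D$, then show that $(d_0\ot f)V=0$ already forces $\mcal{HV}\ot f.V=0$. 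Combined with $d_0\ot f=\frac12[d_{-1}\ot 1,d_1\ot f]$ this gives the convenient description $Ann_V=\{f\in B:(d_1\ot f)V=0\}$, and likewise with $d_{-1}$ in place of $d_1$.

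Second, I would localize to weights. For each $\la$ with $V_\la\neq 0$ and each $m\geq 1$ set $\mcal I_m^{\la}=\{f\in B:(d_m\ot f)V_\la=0\}$. Uniform boundedness gives $\dim\mathrm{Hom}(V_\la,V_{\la+m})\leq N^2$, so each $\mcal I_m^{\la}$ is cofinite; and it is an ideal, since $d_m\ot fc=\frac1m[d_0\ot c,d_m\ot f]$ while $d_0\ot c$ preserves $V_\la$. Thus $Ann_V=\bigcap_{\la,m\geq 1}\mcal I_m^{\la}$ is an intersection of cofinite ideals, and the whole problem is to show this intersection is cofinite.

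The engine for collapsing the intersection is that positive modes commute in a controlled way: $[d_m\ot a,d_i\ot b]=(i-m)\,d_{m+i}\ot ab$ for $m,i\geq 1$. Consequently, if $f$ satisfies $(d_m\ot fc)V_\la=0$ for all $m\geq 1$ and all $c$, then for any raising vector $u=(d_i\ot b)w$ with $w\in V_\la$, $i\geq 1$, one computes $(d_m\ot fc)u=(i-m)(d_{m+i}\ot fcb)w=0$; since a non-highest-weight irreducible uniformly bounded module has no singular vectors, each higher weight space is generated from lower ones by raising, so this property propagates to all weights above $\la$. A symmetric statement holds for negative modes and weights below. Once such an $f$ is found, the identities of the first paragraph upgrade the vanishing of positive and negative modes to $(d_0\ot f)V=0$, whence $f\in Ann_V$; more robustly, for any ideal $\mf a\subseteq B$ the space $\{v\in V:\mcal{HV}\ot\mf a.v=0\}$ is a submodule (again by perfectness and $\mf a$ being an ideal), so it is $0$ or $V$, and it suffices to annihilate a single weight space.

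The step I expect to be the main obstacle is precisely this collapse: showing that finitely many weights (and finitely many modes) already cut out $Ann_V$, so that the infinite intersection above is cofinite. Here I would bring in the $\mf{sl}_2$-triples $\{d_n,d_0,d_{-n}\}$ (central extensions of $\mf{sl}_2$) sitting inside $\mcal{HV}$: uniform boundedness of the weight spaces $\bigoplus_k V_{\la+kn}$, in the spirit of Mathieu's analysis of the Virasoro case, should force the multiplicities to be eventually constant and the mode operators to act across weight spaces in a sufficiently uniform (polynomially parametrized) manner, so that only finitely many seed weights are genuinely independent; the finitely many exceptional weights are then absorbed by intersecting with their (cofinite) ideals $\mcal I_m^{\la}$, and Noetherianity of $B$ guarantees that the stabilized chains meet in a cofinite ideal $J\subseteq Ann_V$. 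With $J$ in hand, $\mcal{HV}\ot J.V=0$ and, since $B/Ann_V$ is a finite-dimensional commutative algebra, it has only finitely many maximal ideals, giving $Supp_V$ finite.
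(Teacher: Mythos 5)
You have assembled several of the right ingredients --- the cofinite ``mode ideals'' at a fixed weight (your $\mcal I_m^{\la}$ are exactly the paper's $R_j$), their cofiniteness via $\dim\mathrm{Hom}(V_\la,V_{\la+m})\leq N^2$, and the key observation that $\{v\in V:\mcal{HV}\ot \mf a.v=0\}$ is a submodule for any ideal $\mf a$, so it suffices to kill a single nonzero vector. But there is a genuine gap exactly where you flag it, and it is not a technicality: nothing in your argument shows that the intersection $\bigcap_{m}\mcal I_m^{\la}$ over the infinitely many modes, even at one single weight $\la$, is cofinite. Your ``engine'' takes as hypothesis that $fB$ already lies in that full intersection and merely propagates this property to other weight spaces; it never reduces the number of modes. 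The closing appeal to Mathieu-style uniformity plus Noetherianity does not repair this: an intersection of infinitely many cofinite ideals in a Noetherian ring, even ideals of uniformly bounded codimension, need not be cofinite (in $\C[t]$, $\bigcap_{n\in\N}(t-n)=0$). Two smaller defects in the engine: you only treat raising vectors of the form $(d_i\ot b)w$, whereas $(d_m\ot fc)(I_i\ot b)w$ produces the term $i(I_{m+i}\ot fcb)w$, which a hypothesis on $d$-modes alone does not control; and the claim that every higher weight space is spanned by raising operators applied to lower ones does not follow from ``no singular vectors'' (that is the dual statement) and itself requires proof.

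The paper closes precisely this gap by importing the map-Virasoro theorem rather than redoing it: take a $Vir\ot B$-composition series of $V$, let $V_1$ be the irreducible submodule at its bottom, and apply Savage's Proposition 4.1 to produce one cofinite ideal $J$ with $Vir\ot J.V_1=0$ --- crucially, on \emph{all} of $V_1$, not just on one weight space. That is what makes the collapse work: for $a\in J$, $r\in R_1$ and $v\in V_1\cap V_{m_0}$, in $[d_j\ot a,I_1\ot r].v=I_{j+1}\ot ar.v$ both straightening terms vanish, since $(I_1\ot r)v=0$ by definition of $R_1$ and $(d_j\ot a)v=0$ because $v\in V_1$ and $a\in J$; hence $JR_1\subseteq R_{j+1}$ for $j\geq 0$, symmetrically $JR_{-1}\subseteq R_{j-1}$ for $j\leq 0$, so the single finite (hence cofinite) product $J^2R_1R_{-1}$ lies in every $R_j$, and further brackets dispose of $I_0\ot{}$, $C_D\ot{}$, $C_I\ot{}$ as well; the submodule trick then finishes. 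Your outline is missing exactly this input (or an independent proof of it): a mechanism forcing the $d$-modes tensored with one fixed cofinite ideal to annihilate an entire $Vir\ot B$-submodule, which can then be leveraged through the brackets $[d_j\ot J,\,I_{\pm 1}\ot R_{\pm 1}]$ to bound all the Heisenberg mode ideals simultaneously. If you graft Savage's result onto your setup in this way, your remaining machinery essentially reproduces the paper's proof; without it, the infinite intersection stays infinite and the theorem is not established.
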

	\begin{proof}
		Firstly, $V$ is irreducible implies that $V=\dis{\bgop_{m\in \Z}}V_{m}$, where $V_{m}=\{v \in V: d_0.v=(a+m)v  \} $ for some $a \in \C$ and $V_{0}\neq 0$. Consider $V$ as a  $Vir\ot B$-module. Since $V$ is uniformly bounded as $Vir\ot B$-module it has a composition series:
		$$0=V_0\subseteq V_1\subseteq V_2\subseteq.......\subseteq V_k=V. $$
		Then $V_1$ is an irreducible uniformly bounded $Vir\ot B$-submodule of $V$. Hence by Proposition 4.1 of \cite{1}, there exists a cofinite ideal $J$ of $B$ such that $Vir \ot J.V_1=0.$ Now we have $V_1=\dis{\bgop_{m \in \Z}V_{m}\cap{ V_1}}$ with $V_{m}\cap V_1\neq 0$ for some $m \in \mathbb Z$. Assume that $V_{m_0}\cap V_1 \neq 0$.\\
		Consider $R_j=\{b \in B: I_j\ot b.V_1\cap V_{m_0}=0\}$. It is easy to that $R_j$ is an ideal of $B$ for all $j \neq 0$. Moreover it is the kernel of the linear map given by 
		\begin{center}
			
			$B \to Hom(V_1\cap V_{m_0} \, , V_{m_0+j}$)\\
			$b \mapsto (v \mapsto I_j\ot b.v)$.\\
			
		\end{center}Hence $R_j$ is a cofinite ideal of $B$. In particular, dimension of $B/R_j$ is bounded by $N^2$ if $dim V_{m}\leq N$ for all $m \in \Z$.\\
		{\bf Claim: } $\mcal{HV} \ot J^2R_1R_{-1}.V=0$. 
		Note that $[d_j\ot a, I_1\ot r]=I_{j+1}\ot ar$ implies that $JR_1 \subseteq R_{j+1}$ for all $j \geq 0$. Similarly we have $JR_{-1} \subseteq R_{(j-1)}$ for all $j \leq 0$. Hence we have
		\begin{center}
			$J^2R_1R_{-1}\subseteq R_j$ for all $j \neq 0$. 
		\end{center}
		It is easy to see using the bracket operation $$[d_n\ot b,I_m\ot c]=mI_{m+n}\ot bc+\de_{n,-m}(n^2+n)C_D\ot bc,$$ that $I_0 \ot J^2R_1R_{-1}$ and $C_D \ot J^2R_1R_{-1}$ act trivially on $V_1\cap V_{m_0}$. Also $C_I$ is generated by $I_n, I_{-n}$, hence we have $\mcal{HV} \ot J^2R_1R_{-1}.V_1\cap V_{m_0}=0.$ Now note that $W=\{v \in V: \mcal{HV} \ot J^2R_1R_{-1}.v=0 \}$
		is a non-zero $\mcal{L_B}$-submodule of $V$. Hence the claim. This prove the result using the fact that product of finitely many cofinte ideal is cofinite.

	\end{proof}
	\begin{theorem}\label{t3.2}
		Let $V$ be a non-trivial uniformly bounded irreducible module for $\mcal{L_B}$. Then there exists a cofinite ideal $J$ of $B$ supported at a single point such that $\mcal{HV}\otimes J.V=0.$ In particular, non trivial uniformly bounded $\mcal{HV} \otimes B$ modules have support at a single point.
	\end{theorem}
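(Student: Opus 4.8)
The plan is to promote the finite-support conclusion of Theorem \ref{t3.1} to single-point support by decomposing the (now finite-dimensional) coefficient algebra into its local factors and then using uniform boundedness to show that only one factor can act non-trivially. By Theorem \ref{t3.1} I first choose a cofinite ideal $J$ with $\mcal{HV}\ot J.V=0$ and set $\ov B=B/J$; this is a finite-dimensional commutative unital algebra and $V$ is an irreducible module for $\mcal{HV}\ot \ov B$. Being Artinian, $\ov B\cong \ov B_1\op\cdots\op \ov B_s$ with each $\ov B_i$ local (maximal ideal $\mathfrak n_i$, residue field $\C$), the summands corresponding to the finitely many maximal ideals $\mathfrak m_1,\dots,\mathfrak m_s$ of $B$ containing $J$. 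Writing $e_i$ for the orthogonal idempotents, $\mcal{HV}\ot\ov B=\bgop_i \mcal{HV}\ot\ov B_i$ is a direct sum of commuting ideals, so by the standard tensor decomposition of an irreducible module over a direct sum of ideals (valid since $V=\bgop_{m}V_m$ has countable dimension over the algebraically closed field $\C$) we obtain $V\cong V_1\boxtimes\cdots\boxtimes V_s$ with each $V_i$ an irreducible $\mcal{HV}\ot\ov B_i$-module. One checks that $\mathfrak m_i\in Supp_V$ precisely when $V_i$ is non-trivial, so the assertion becomes: at most one factor $V_i$ is non-trivial.

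Next I observe that each $\mcal{HV}\ot\ov B_i$ is perfect (since $\mcal{HV}$ is perfect — every generator $d_n,I_n,C,C_D,C_I$ lies in $[\mcal{HV},\mcal{HV}]$ — and $\ov B_i$ is unital) and contains a copy of the Virasoro algebra, namely $span\{d_n\ot e_i\}\op\C(C\ot e_i)$, together with the $\mf{sl}_2$-triple $\{d_{1}\ot e_i,\,d_0\ot e_i,\,d_{-1}\ot e_i\}$. Since it is classical that the Virasoro (equivalently Witt) algebra admits no non-trivial finite-dimensional module, perfectness forces any non-trivial irreducible $V_i$ to be infinite-dimensional. Grading $V_i$ by $D_i=d_0\ot e_i$, whose generalized eigenspaces $V_i^{(\mu)}$ are shifted by the $\Z$-degree of $\mcal{HV}\ot\ov B_i$, and using that $d_0\ot 1=\sum_i D_i$ acts semisimply on the weight module $V$ with $\dim V_\la\le N$, a short computation (embed $V_i^{(\mu)}$ tensored with fixed non-zero vectors in the other factors into a single weight space of $V$) gives $\dim V_i^{(\mu)}\le N$ for every $\mu$. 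Hence each non-trivial $V_i$ is itself uniformly bounded with an infinite $D_i$-support $S_i$ inside a single coset $\mu_i+\Z$.

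The crux is to derive a contradiction from the existence of two non-trivial factors, say $V_1$ and $V_2$. Under $V=V_1\boxtimes V_2\boxtimes(\cdots)$ the weight space of weight $\nu$ for $d_0\ot 1$ contains $\bgop_{k+l=\nu}V_1^{(k)}\ot V_2^{(l)}$, so it suffices to produce some $\nu$ for which this sum has infinitely many non-zero terms, violating uniform boundedness. This reduces to showing each $S_i$ is cofinite in its coset. Unboundedness in both directions is forced directly: if, say, $S_1$ were bounded above, its top generalized eigenspace would be annihilated by the whole positive part $(\mcal{HV}\ot\ov B_1)^+$ (every $d_n\ot a$ and $I_n\ot a$ with $n\ge 1$ raises $D_1$-degree), making $V_1$ a highest weight module, which cannot be simultaneously infinite-dimensional and uniformly bounded; the lower bound is symmetric. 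The no-gaps statement then follows from the structure of cuspidal weight modules over Witt-type algebras (Mathieu's classification, as underlies Theorem 1 and \cite{1}), applied through the $\mf{sl}_2$-triple $\{d_{\pm1}\ot e_i,\,d_0\ot e_i\}$. With $S_1,S_2$ cofinite in their cosets, the convolution above omits only finitely many terms and is therefore infinite for every admissible $\nu$, the desired contradiction. This control of the shape of the supports $S_i$ is the main obstacle; everything else is formal. Consequently exactly one factor is non-trivial, $Supp_V=\{\mathfrak m_1\}$, and taking $J'$ with $\sqrt{J'}=\mathfrak m_1$ and $\mcal{HV}\ot J'.V=0$ completes the proof.
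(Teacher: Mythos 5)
Your argument is essentially correct, but it takes a genuinely different route from the paper's. The paper never forms an external tensor product: after factoring $J=J_1\cdots J_k$ into ideals supported at distinct points, it sets $L_1=\mcal{HV}\ot(B/J_1)$ and $L_2=\bgop_{i\geq 2}\mcal{HV}\ot(B/J_i)$, decomposes $V$ into joint eigenspaces $V_{i,j}$ of the commuting operators $\de_1=d_0\ot(1+J_1)$ and $\de_2$, observes that each slice $V_{(*,j)}=\bgop_i V_{i,j}$ is an $L_1$-submodule on which $L_1$ acts non-trivially, and quotes the structure of uniformly bounded $Vir$-modules to get $V_{i,j}\neq 0$ whenever $\al+i\neq 0\neq \be+j$; the diagonal $\bgop_i V_{i,-i}$ then sits inside the single weight space $V_{\al+\be}$, contradicting uniform boundedness. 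You reach exactly the same diagonal contradiction, but through the decomposition $V\cong V_1\boxtimes\cdots\boxtimes V_s$ over the local factors of $B/J$. What your route buys is conceptual clarity: each non-trivial factor is itself an irreducible uniformly bounded module, and the contradiction becomes a clean convolution-of-supports statement. What it costs is a heavier black box: the tensor-product theorem for irreducible modules over a direct sum of Lie algebras. That theorem is available in this setting (it is what underlies the tensor-product statements of Section 4 and of \cite{1}), but your parenthetical justification is too quick: countable dimension gives Dixmier's Schur lemma, while the other essential ingredient --- the existence of an irreducible $L_1$-submodule of $V$ --- comes from the finite length of uniformly bounded weight modules, i.e., precisely from the joint-eigenspace picture that the paper manipulates directly. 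So your approach is more modular but not more elementary; the paper's is self-contained modulo the same ``standard results for $Vir$ modules.''

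Two further remarks on the support analysis. Your separate step showing each $S_i$ is unbounded in both directions (by excluding highest weight modules) is redundant: cofiniteness of $S_i$ follows in one stroke from the facts behind your no-gaps step, namely that a uniformly bounded $Vir$-weight module has finite length, that its non-trivial irreducible constituents are modules of the intermediate series, and that the support of such a constituent is a full coset minus at most $\{0\}$; moreover, your highest-weight exclusion itself secretly rests on that same finite-length-plus-classification fact, so nothing would be lost by merging the two steps. Finally, the appeal to the $\mf{sl}_2$-triple $\{d_{\pm 1}\ot e_i, d_0\ot e_i\}$ plays no genuine role in ruling out gaps --- it is Mathieu's classification of the uniformly bounded irreducibles, not $\mf{sl}_2$-theory, that does the work there.
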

	\begin{proof}
		By Theorem \ref{t3.1}, there exists a cofinite ideal $J$ such that $\mcal{HV} \ot J.V=0$. Since $J$ is cofinite there exists ideals $J_1, \dots, J_k$ supported at distinct points such that $J=J_1\dots J_k$. Now we have $\mcal{HV} \ot B/\mcal{HV} \ot J \simeq \dis{\bigoplus_{i=1}^{k}} \mcal{HV}\ot (B/J_i) $. Therefore to prove this theorem it is sufficient to prove that at most one summand above acts non-trivially on $V$. Let $L_1= \mcal{HV}\ot (B/J_1)$ and $L_2=\dis{\bigoplus_{i=2}^{k}} \mcal{HV}\ot (B/J_i)$. Assume that $L_1$ and $L_2$ both acts non-trivially on $V$. Let $$\de_1=d_0\ot(1+J_1), \hspace{1cm } \de_2=(0,d_0\ot(1+J_2),\dots,d_0\ot(1+J_k)),\hspace{1cm }  \de=\de_1+\de_2, $$

		Then we have $d_0.v=\de.v$ for all $v \in V$. 
		Now consider $V'=\dis{\bigoplus_{i,j \in \Z}}V_{i,j}$, where $V_{i,j}=\{v \in V:\de_1.v=(\alpha+i)v,\de_2.v=(\be+j)v\},$ for some complex numbers $\al, \be$. Note that $span\{\de_1,\de_2\}$ is an abelian subalgebra of $L_1 \op L_2$. Hence there exists a common eigen vector, which implies that $V' \neq 0$.  It is easy to see that $V'$ is a $L_1 \op L_2$ submodule of $V$. Therefore by irreducibility of $V$ we have $V'=V$.\\
		Note that for each $i \in \Z$ we have that $V_{(*,j)}= \dis{\bigoplus_{i \in \Z}}V_{i,j}$ is a $L_1$ submodule of $V$ due to the fact that $[L_1,L_2]=0$. Moreover none of this $V_{(*,j)}$ can be a nonzero trivial module, since if $L_1$ acts trivially on some nonzero vector. Then  irreducibility of $V$ and the fact $[L_1,L_2]=0$ implies that $L_1$ acts trivially on $V$, a contradiction. Now consider $V_{(*,j)}$ as a module for $Vir$ and note that it is a uniformly bounded as $Vir$ module. Therefore by standard results for $Vir$ modules we have $V_{i,j} \neq 0$ for all $\al +i \neq 0$, whenever $V_{(*,j)} \neq 0$. Similarly assuming $L_2$ action on $V$ non-trivial we have $V_{i,j} \neq 0$ for all $\be + j \neq 0$ whenever $V_{(i,*)} \neq 0$. Thus we have $V_{i,j} \neq 0$ when $\al +i \neq 0 \neq \be +j$. Now, $V_{(\al +\be)} \supseteq \dis{\bigoplus_{i \in \Z}}V_{i,-i}$ with right hand side being infinite dimensional, a contradiction. This completes the proof.
	\end{proof}
	Now we make a observation which we will use in the next theorem.
	Let $\mcal{L}$ be a Lie algebra with a vector space decomposition $\mf L=W \op W'$. Then we can pick up order basis of $W$ and $W'$ as $\bs{\Gamma}$ and $\bs{\Gamma'}$ and obtain an order basis of $\mf L$ by defining $b \geq b'$ for all $b \in \bs \Gamma$ and $b'\in \bs \Gamma'$. Let $U_n(W)$ be denote the subspace of $U(\mf L)$ spanned by the monomials of the form $x_1\dots x_s$ such that $x_i \in \bs \Gamma , s \leq n, x_1 \geq \dots \geq x_s$ and set $U(W)=\dis{\cup_{n \geq 0}U_n(W)}$. Similarly define $U(W')$. Then we have $U(\mf L)\simeq U(W)\ot U(W')$, see \cite{1} for more details.

	\begin{proposition}\label{p3.1}
		Let $V$ be an irreducible uniformly bounded Harish-Chandra module for $\mcal{HV} \ot B$ with $B$ finite dimensional. Then $(\mcal{HV} \ot J).V=0$ for any ideal $J$ satisfying $J^2=0.$ 
	\end{proposition}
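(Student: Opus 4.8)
The plan is to prove that the abelian ideal $\mf a:=\mcal{HV}\ot J$ annihilates $V$. Since $J^2=0$ and $J$ is an ideal of $B$, the bracket formula shows that $\mf a$ is abelian and that $[\mcal{L_B},\mf a]\subseteq\mf a$; thus $W:=\{v\in V:\mf a.v=0\}$ is a submodule, because for $y\in\mf a$, $x\in\mcal{L_B}$, $v\in W$ one has $y.(x.v)=[y,x].v+x.(y.v)=0$ as $[y,x]\in\mf a$. By irreducibility it therefore suffices to exhibit one non-zero vector annihilated by $\mf a$, which I will do by showing that each generator $C\ot j,\,C_I\ot j,\,I_0\ot j,\,C_D\ot j,\,d_m\ot j,\,I_m\ot j$ (with $j\in J$) acts by $0$. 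By Theorem \ref{t3.2} I may also assume $B$ is local Artinian; then $J$ lies in the maximal ideal $\mf m$, so every $j\in J$ is nilpotent in $B$.

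The central part $\mcal C\ot J$ is killed one generator at a time. For $C\ot j$ I would restrict $V$ to $Vir\ot B$ and pass to a composition series: each irreducible uniformly bounded $Vir\ot B$-subquotient is, by Proposition 4.1 of \cite{1}, a single point evaluation module at some maximal ideal $\mf n$, and since $J\subseteq\sqrt{0}\subseteq\mf n$ the whole of $Vir\ot J$ (in particular $C\ot j$) acts by $0$ on every subquotient; being central in $\mcal{L_B}$, the operator $C\ot j$ is a global scalar, which must then vanish. For $C_I\ot j$ I would restrict to the graded Heisenberg subalgebra $\mathrm{span}\{d_0,I_n,C_I:n\in\Z\}\ot B$: a non-zero value of $C_I\ot b$ would make $I_m\ot 1$ and $I_{-m}\ot b$ a Heisenberg pair of non-zero level, whose modules have unbounded weight multiplicities, contradicting uniform boundedness; hence $C_I\ot B=0$, and in particular $C_I\ot j=0$.

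The heart of the argument is the vanishing of the remaining central charges $I_0\ot j$ and $C_D\ot j$, and this I expect to be the main obstacle. Writing $\iota_j,\de_j$ for the scalars by which $I_0\ot j,\,C_D\ot j$ act, the bracket $[d_m\ot 1,I_{-m}\ot j]=-m(I_0\ot j)+(m^2+m)(C_D\ot j)$ exhibits the scalar $-m\iota_j+(m^2+m)\de_j$ as a commutator of the weight-shifting operators $d_m\ot 1$ and $I_{-m}\ot j$; taking traces on a weight space $V_\la$ and using cyclicity yields the telescoping identity $q(\la-m)-q(\la)=(-m\iota_j+(m^2+m)\de_j)\dim V_\la$ with $q(\la)=\mathrm{tr}_{V_\la}\big((I_{-m}\ot j)(d_m\ot 1)\big)$. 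This identity is inconclusive on its own, since $q$ a priori grows linearly in $\la$, matching the linear growth of the summed right-hand side. To break the tie one must pin down the exact dependence of the action on the weight: because $V$ is an irreducible Harish-Chandra module the matrix coefficients of $d_n$ and $I_n$ are polynomial in $\la$, and finiteness of the $Vir\ot B$-composition length makes $d_0\ot j$ (indeed all of $Vir\ot J$) act nilpotently, hence with vanishing trace on every $V_\la$. Inserting this polynomial normal form into the family of identities obtained as $m$ varies, and pairing the relation above with its companion from $[d_{-m}\ot 1,I_m\ot j]=m(I_0\ot j)+(m^2-m)(C_D\ot j)$, then forces $\iota_j=\de_j=0$. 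This determination of the normal form, adapting the method of \cite{1} to the Heisenberg directions, is the step I expect to demand the most care.

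Once all of $\mcal C\ot J$ acts by $0$, the central correction terms in $[d_n\ot 1,d_m\ot j]=(m-n)d_{m+n}\ot j$ and $[d_n\ot 1,I_m\ot j]=mI_{m+n}\ot j$ disappear, so the commuting families $\{d_m\ot j\}_{m\in\Z}$ and $\{I_m\ot j\}_{m\in\Z}$ become families of operators on the uniformly bounded $Vir$-module $V$ transforming as Virasoro tensor density modules. Applying the rigidity of such intertwining families on uniformly bounded modules — via the decomposition $U(\mcal{L_B})\cong U(W)\ot U(W')$ recorded above together with the resulting polynomiality argument, now applied to the weight-zero family $\{I_m\ot j\}$ as well as to $\{d_m\ot j\}$ — shows that both families vanish, that is $d_m\ot j=I_m\ot j=0$ for all $m$. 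Collecting the preceding steps gives $\mf a.V=0$, so $W=V$ and $(\mcal{HV}\ot J).V=0$, as required.
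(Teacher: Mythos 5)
Your skeleton (reduce to $B$ local via Theorem \ref{t3.2}, note that $W=\{v\in V:(\mcal{HV}\ot J).v=0\}$ is a submodule, then kill generators one at a time) is sound, but the steps that carry all the weight are either unproved or wrong. The $C_I$ step rests on a false claim: a single Heisenberg pair of nonzero level does \emph{not} force unbounded weight multiplicities. Indeed, take $V=\C[t,t^{-1}]$ with $I_m\ot 1$ acting as multiplication by $t$, $I_{-m}\ot b$ acting as $-mc\,\frac{d}{dt}$, $C_I\ot b$ acting as the nonzero scalar $c$, and $d_0$ acting as $\la+mt\frac{d}{dt}$: all the relevant brackets hold, every weight space is one-dimensional, and the level is nonzero. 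The unbounded-multiplicity phenomenon requires the whole Heisenberg algebra (infinitely many pairs) and a genuine Fock-type argument, not a single Weyl pair. More seriously, the heart of the proposition --- the vanishing of $I_0\ot f$ and $C_D\ot f$, and then of the families $d_m\ot f$, $I_m\ot f$ --- is never established: you derive a trace identity, concede it is inconclusive, and then defer to a ``polynomial normal form'' of matrix coefficients and a ``rigidity of intertwining families'' which are stated nowhere and are essentially as hard as the proposition itself. (A smaller slip: Proposition 4.1 of \cite{1} gives a cofinite annihilating ideal, i.e.\ finite support; the single-point evaluation statement your $C\ot f$ step needs is the classification theorem of \cite{1}.)

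For contrast, the paper closes exactly these gaps with nilpotency arguments rather than traces or normal forms. It first shows $d_0\ot f-a_f$ acts nilpotently, using $U(\mcal{HV}\ot B)=U(\mcal{HV}'\ot A)\ot\widetilde U$ where $\widetilde U=U(\mcal C\ot B)\ot U(\mcal{HV}\ot J)$ is commutative because $J^2=0$; Savage's argument (Proposition 4.5 of \cite{1}, step 2) then yields $a_f=0$, $C\ot f.V=0$ and $(d_j\ot f)^N.V=0$ for $j\neq 0$. The central charges are then dispatched by a short commutator computation: if $m_j$ is minimal with $(d_j\ot f)^{m_j}.V=0$, then $0=[I_{-j},(d_j\ot f)^{m_j}].V=m_j\{jI_0\ot f-(j^2+j)C_D\ot f\}(d_j\ot f)^{m_j-1}.V$, and since $(d_j\ot f)^{m_j-1}.V\neq 0$ while $I_0\ot f$, $C_D\ot f$ act as scalars, one gets $j\iota_f=(j^2+j)\de_f$ for all $j\neq 0$, forcing both to vanish. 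Finally, instead of any rigidity principle, the paper propagates nilpotency outward: applying suitable $d_{i}$'s and $I_{j}$'s to $(d_0\ot f)^N.V=0$ gives $(\mcal{HV}\ot J)^M.V=0$ with $M=\dim B(N-1)+1$, hence $(U(\mcal{HV}\ot B)(\mcal{HV}\ot J)U(\mcal{HV}\ot B))^M.V=0$; since $U(\mcal{HV}\ot B)(\mcal{HV}\ot J)U(\mcal{HV}\ot B).V$ is a submodule of the irreducible $V$ that cannot be all of $V$, it is zero. Note that this last stroke also kills $C_I\ot f$, so the step your proposal handles separately (and incorrectly) needs no separate treatment at all. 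To rescue your outline, the two deferred steps would need arguments of this concrete kind.
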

	\begin{proof}
		Fix some $f \in J$. First note that $I_0 \ot f$ is central and hence acts a scalar. On the other hand $d_0 \ot f$ leaves weight spaces invariant, therefore  there exists a non-zero weight vector $v \in V$ such that $d_0\ot f.v=a_fv$.\\
		{\bf Claim 1: } $d_0 \ot f -a_f$ acts nilpotently on $V$.\\
		It is clear that due to uniformly bounded condition to prove claim 1 it is sufficient to prove that $d_0 \ot f -a_f$ acts locally nilpotently on $V$. Let $B=J+A$ for some vector space compliment $A$ of $J$ in $B$. Then we have a vector space decomposition $\mcal{HV} \ot B =\mcal{HV}' \ot A \op \mathcal C \ot B \op  \mcal{HV} \ot J,$  where $\mcal{HV}'= \dis{\bigoplus_{i \in \Z}\C d_i \bigoplus_{j \in \Z}\C I_j}$. Hence 
		$$U(\mcal{HV} \ot B) =U(\mcal{HV}' \ot A) \ot U(\mathcal C \ot B) \ot  U(\mcal{HV} \ot J).$$
		Since $J^2=0$ and $\mathcal C$ is central, so $\widetilde U=U(\mathcal C \ot B) \ot  U(\mcal{HV} \ot J) $ is an abelian algebra. Since $V$ is irreducible we have $V=U(\mcal{HV} \ot B)v$. We prove that $(d_0 \ot f -a_f)^{n+1}$ acting trivially on $U_n(\mcal{HV}'\ot B)\widetilde U .v$ for all $n \geq 0$. Note that there is nothing to prove for $n=0$. Assume that it is true for $n \leq k$ for some $k \in \N$. Let $u_1,\dots,u_s \in \mcal{HV}'\ot B$, for some $s \leq k$. Then $  (d_0 \ot f -a_f)^{k+1}.u_1\dots u_s.\wtil u.v=(d_0 \ot f -a_f)^{k}[(d_0 \ot f -a_f),u_1\dots u_s.\wtil u].v=(d_0 \ot f -a_f)^{k}[d_0 \ot f ,u_1\dots u_s.\wtil u].v. $ Note that $[d_0 \ot f ,u_1\dots u_s.\wtil u] \in U_{s-1}(\mcal{HV}'\ot B)\widetilde U$, and hence by induction hypothesis $(d_0 \ot f -a_f)^{k}[d_0 \ot f ,u_1\dots u_s.\wtil u].v=0$. This proves claim 1.\\
		{\bf Claim 2:} $a_f=0$ and $C\ot f, C_D \ot f, I_0 \ot f$ acts trivially on $V$.\\
		$a_f=0$ and $C\ot f$ acts trivially on $V$ follows with the same proof of the Proposition 4.5, \cite{1} step 2. Moreover from that proof we have $(d_j \ot f)^r(d_0\ot f-a_f)^{N-r}V=0$ for all $j \in \Z\setminus\{0\}, 0 \leq r \leq N,$ where $N$ is the maximum dimension of weight spaces of $V$. In particular we have $(d_j \ot f)^N.V=0$ for all $j \neq 0$.
		Let $m_j$ be the smallest positive integer such that $(d_j \ot f)^{m_j}.V=0$. Then consider 
		
		$$0=[I_{-j},(d_j \ot f)^{m_j}].V$$
		$$\hspace{2.3cm}=m_j(d_j \ot f)^{m_j-1}[I_{-j},(d_j \ot f)].V$$
		$$\hspace{4.8cm}=m_j(d_j \ot f)^{m_j-1}\{jI_0\ot f -(j^2+j)C_D\ot f\}.V  $$
		$$\hspace{4.8cm}=m_j\{jI_0\ot f -(j^2+j)C_D\ot f\}(d_j \ot f)^{m_j-1}.V , $$
		in this computation we have used the fact that $\mcal{HV} \ot J$ is abelian. Since $m_j$ is the smallest so, there exists a non zero $v $ such that $jI_0 \ot f.v=(j^2+j)C_D\ot f.v$. Since both $I_0 \ot f$ and $C_D\ot f$ acts as scalar the above equality possible for all $j \neq 0$ only when $I_0\ot f .v=C_D\ot f.v=0$.\\
		{\bf Claim 3:} $(\mcal{HV} \ot J)^M.V=0$, where $M=dim B(N-1)+1$.\\
		Since $(d_0 \ot f)^{N}.V=0,$ for $i_1 \neq 0$, we have
		$$0=d_{i_1}.(d_0\ot f)^N.V=-Ni_1(d_{i_1}\ot f)(d_0\ot f)^{N-1}.V.$$ Now consider for $j_1 \neq 0$, 
		$$0=I_{j_1}. (d_{i_1}\ot f)(d_0\ot f)^{N-1}.V= I_{j_1}.(d_0\ot f)^{N-1} (d_{i_1}\ot f).V=j_1(N-1)I_{j_1}\ot f(d_{i_1}\ot f)(d_0\ot f)^{N-2}.V.$$
		Continuing this process we have for $i_1,\dots , i_k \neq 0$ and $j_1,\dots, j_r \neq 0$,
		\begin{align}\label{a3.1}
			(I_{j_1}\ot f)\dots (I_{j_r}\ot f)(d_{i_1}\ot f) \dots (d_{i_k}\ot f).V=0 ,
		\end{align}  where cardinality of the set $\{i_1,\dots, i_k,j_1,\dots, j_r\}=N$. Now by expanding in a basis for $B$ and using equation (\ref{a3.1}) we have 
		$$(I_{j_1}\ot f_1)\dots (I_{j_r}\ot f_r)(d_{i_1}\ot g_1) \dots (d_{i_k}\ot g_k).V=0 ,$$  for all $  i_1, \dots,i_k, j_1, \dots, j_r \in \Z , f_1,\dots,f_k,g_1, \dots,g_r \in B .  $ This proves claim 3.\\
		Thus we have $U(\mcal{HV} \ot B)(\mcal{HV} \ot J)^MU(\mcal{HV} \ot B).V=0$ It is easy to see that $ U(\mcal{HV} \ot B)(\mcal{HV} \ot J)^MU(\mcal{HV} \ot B)=(U(\mcal{HV} \ot B)(\mcal{HV} \ot J)U(\mcal{HV} \ot B))^M$. This implies that $(U(\mcal{HV} \ot B)(\mcal{HV} \ot J)U(\mcal{HV} \ot B))V\neq V$. Since $(U(\mcal{HV} \ot B)(\mcal{HV} \ot J)U(\mcal{HV} \ot B))V$ is a submodule of $V$ and $V$ is irreducible, hence we have $(U(\mcal{HV} \ot B)(\mcal{HV} \ot J)U(\mcal{HV} \ot B)).V=0$. This turns out to the fact that $(\mcal{HV} \ot J).V=0$.

	\end{proof}
	Now we are in a position to describe uniformly bounded irreducible modules for $\mcal{L_B}$. The following theorem states the classification for uniformly bounded modules of $\mcal{L_B}$ and proof follows from verbatim same proof as of Theorem 4.7 of \cite{1} using Theorem \ref{t3.1}, \ref{t3.2} and Proposition \ref{p3.1}.
	\begin{theorem}
		Any uniformly bounded irreducible Harish-Chandra modules for $\mcal{L_B}$ are single point evaluation modules for modules of intermediate series of $\mcal{HV}$.  
	\end{theorem}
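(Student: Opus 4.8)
The plan is to combine the three preceding results to collapse the $B$-action onto a single point and then invoke the $\mcal{HV}$-classification of \cite{LZHV}. First I would apply Theorem \ref{t3.1} together with Theorem \ref{t3.2} to produce a cofinite ideal $J$ supported at a single maximal ideal $\mf m$ with $\mcal{HV}\ot J.V=0$. Consequently the $\mcal{L_B}$-action factors through $\mcal{HV}\ot \ov B$, where $\ov B=B/J$ is a finite dimensional \emph{local} algebra whose maximal ideal $\ov{\mf m}=\mf m/J$ is nilpotent, say $\ov{\mf m}^N=0$ with $\ov{\mf m}^{N-1}\neq 0$. Note that $V$ remains irreducible, uniformly bounded and Harish-Chandra as a $\mcal{HV}\ot\ov B$-module, since the two module structures have precisely the same weight spaces and the same submodules.

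Next I would show $(\mcal{HV}\ot\ov{\mf m}).V=0$ by induction on the nilpotency index $N$. When $N=1$ we have $\ov{\mf m}=0$ and there is nothing to prove. For $N\geq 2$, the top power $\ov{\mf m}^{N-1}$ satisfies $(\ov{\mf m}^{N-1})^2=\ov{\mf m}^{2N-2}=0$, since $2N-2\geq N$, so it is a square-zero ideal of the finite dimensional algebra $\ov B$. Proposition \ref{p3.1} then gives $(\mcal{HV}\ot\ov{\mf m}^{N-1}).V=0$, so the action factors further through $\mcal{HV}\ot(\ov B/\ov{\mf m}^{N-1})$, again a finite dimensional local algebra but now with maximal ideal of nilpotency index $N-1$. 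Applying the induction hypothesis to this smaller quotient yields $(\mcal{HV}\ot\ov{\mf m}).V=0$.

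Therefore the action factors through $\mcal{HV}\ot(\ov B/\ov{\mf m})\cong\mcal{HV}\ot\C\cong\mcal{HV}$, that is, $V$ is a single point evaluation module at $\mf m$. The underlying $\mcal{HV}$-module is again irreducible (any $\mcal{HV}$-submodule is an $\mcal{L_B}$-submodule under this evaluation) and uniformly bounded, so by the classification of \cite{LZHV} stated above it is isomorphic to $V(\al,\be,F)$ or $V'(0,0,0)$, i.e.\ a module of intermediate series. This identifies $V$ as a single point evaluation module for a module of intermediate series of $\mcal{HV}$, as claimed. The main obstacle is the passage from the square-zero case handled by Proposition \ref{p3.1} to the general nilpotent maximal ideal $\ov{\mf m}$; the key observation making the induction work is that each successive quotient $\ov B/\ov{\mf m}^{k}$ is still finite dimensional and local, so the hypotheses of Proposition \ref{p3.1} persist at every stage, while everything else is routine bookkeeping.
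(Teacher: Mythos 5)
Your proposal is correct and takes essentially the same route as the paper: the paper's proof is just an appeal to the verbatim argument of Theorem 4.7 of \cite{1} combined with Theorems \ref{t3.1}, \ref{t3.2} and Proposition \ref{p3.1}, which is precisely the reduction you carry out (single-point support, then induction on the nilpotency index of $\ov{\mf m}$ via the square-zero case of Proposition \ref{p3.1}, then the classification of \cite{LZHV}). Your explicit induction killing the top power $\ov{\mf m}^{N-1}$ at each stage, with the observation that each quotient stays finite dimensional, local, and carries $V$ as an irreducible uniformly bounded Harish-Chandra module, is a faithful reconstruction of that argument.
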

	\section{Non-Uniformly bounded modules}
	In this section we classify non-uniformly bounded irreducible Harish-Chandra modules.
	\begin{theorem}
		Any irreducible non-uniformly bounded Harish-Chandra module for $\mcal{L_B}$ is either highest weight module or lowest weight module.
	\end{theorem}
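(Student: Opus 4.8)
The plan is to show that a non-uniformly bounded irreducible Harish-Chandra module must be bounded on one side in the $d_0$-grading, and that one-sided boundedness immediately produces a highest or lowest weight vector. Since $V$ is an irreducible Harish-Chandra module, $\mcal C$ (indeed all of $\mcal C \ot B$, which is central in $\mcal{L_B}$) acts by scalars, so we may write $V=\dis{\bgop_{n\in\Z}}V_{a+n}$ with $V_{a+n}=\{v\in V:d_0.v=(a+n)v\}$ and each $f(n):=\dim V_{a+n}<\infty$. Set $P(V)=\{n\in\Z:V_{a+n}\neq 0\}$; non-uniform boundedness means $\sup_n f(n)=\infty$. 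The crux is the claim that $P(V)$ is either bounded above or bounded below. Granting this, suppose $P(V)$ is bounded above and put $\mu=a+\max P(V)$. Every element of $\mcal{L_B}^+=\mcal{HV}^+\ot B$ strictly raises the $d_0$-degree (since $[d_0,d_i]=id_i$ and $[d_0,I_i]=iI_i$ with $i>0$), and there is no weight space above $\mu$, so $\mcal{L_B}^+.V_\mu=0$. Any nonzero $v\in V_\mu$ is then a highest weight vector, and irreducibility gives $U(\mcal{L_B}).v=V$, so $V$ is a highest weight module; the case $P(V)$ bounded below yields a lowest weight module symmetrically. This reduces the theorem to the boundedness claim.

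To prove the claim I would restrict $V$ to the Virasoro subalgebra $Vir=span\{d_n\ot 1:n\in\Z\}\op\C(C\ot 1)$. The decomposition $V=\bgop_n V_{a+n}$ is exactly the $d_0\ot 1$-weight decomposition, so $V$ is a (possibly reducible) Harish-Chandra module over $Vir$. Assume for contradiction that $P(V)$ is unbounded in both directions; I aim to force $f$ to be bounded, contradicting the hypothesis. The mechanism is a Mathieu-type growth estimate: for $n\gg 0$ the operators $d_n\ot 1:V_{a+m}\to V_{a+m+n}$ and $d_{-n}\ot 1:V_{a+m}\to V_{a+m-n}$ are coupled through $[d_n\ot 1,d_{-n}\ot 1]=-2n(d_0\ot 1)+\de_{n,-m}\tfrac{n^3-n}{12}(C\ot 1)$, which acts on $V_{a+m}$ by the explicit scalar $-2n(a+m)+\tfrac{n^3-n}{12}\la_C$. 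When $P(V)$ is infinite on both sides, the usual Virasoro bookkeeping with these scalars shows the multiplicities cannot grow, forcing uniform boundedness and hence the contradiction.

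The main obstacle is this growth step and, more precisely, transporting the conclusion from the $Vir\ot 1$-restriction to the full module $\mcal{L_B}$. Because $V\,|_{Vir\ot 1}$ need not be irreducible, Mathieu's classification cannot be quoted directly; instead the estimate must be run on $V$ itself while simultaneously controlling the extra generators $I_n$, the central elements $C_D,C_I,I_0$, and the tensor factor $B$. Here I would use the relation $[d_n,I_m]=mI_{m+n}+\de_{n,-m}(n^2+n)C_D$ to propagate any vanishing of the $d$-action on a weight space to the corresponding $I$-action, so that bounding the Virasoro multiplicities bounds the full $\mcal{HV}\ot B$ multiplicities, and I would invoke the single-point support information from the uniformly bounded analysis (Theorems \ref{t3.1} and \ref{t3.2}) where a finiteness reduction on $B$ is needed. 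Once uniform boundedness is derived in the two-sided-unbounded case, it contradicts the hypothesis, the boundedness claim follows, and with it the theorem.
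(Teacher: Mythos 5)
Your reduction step is fine: if the support is bounded above, a vector of maximal weight is killed by $\mcal{L_B}^+$ and irreducibility makes $V$ a highest weight module (symmetrically for bounded below). But this reduction is the trivial part of the theorem. The crux of your argument --- the claim that two-sided unbounded support forces uniform boundedness --- is exactly the entire content of the statement, and your proposal does not prove it. The ``Mathieu-type growth estimate'' is never stated, let alone carried out: for the Virasoro algebra itself, the assertion that two-sided unbounded support plus irreducibility forces bounded multiplicities \emph{is} Mathieu's classification theorem \cite{OM}, a deep result that cannot be reproduced by ``the usual Virasoro bookkeeping'' with the scalars $-2n(a+m)+\tfrac{n^3-n}{12}\la_C$. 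Worse, as you yourself note, $V$ restricted to $Vir\ot 1$ need not be irreducible, so even quoting Mathieu's theorem is unavailable, and your proposed fix (``run the estimate on $V$ itself while controlling $I_n$, the center, and $B$'') is a description of a hope, not an argument. Your appeal to Theorems \ref{t3.1} and \ref{t3.2} is also circular in context: their hypothesis is that the module is uniformly bounded and irreducible over $\mcal{L_B}$, which is precisely what you are trying to establish at that point of the contradiction.

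The paper closes this gap by a route that avoids any growth estimate. It applies the Martin--Piard structure theorem [\cite{MP}, Theorem 3.4] to the \emph{non-irreducible} restriction of $V$ to $Vir$: the quotient $W/T$ (minimal submodule with trivial quotient, modulo maximal trivial submodule) splits as $\ov W^-\op\ov W^0\op\ov W^+$ with weights bounded below, uniformly bounded, and bounded above respectively --- this is the structural input valid without irreducibility that your sketch is missing. From $\ov W^+$ it extracts a nontrivial highest weight $Vir$-subquotient $M/M'$ of highest weight $\la\neq 0$; since a non-uniformly bounded simple highest weight $Vir$-module has weight spaces of dimension exceeding $2\dim M_\la$, a dimension count produces, for each $f\in B$, a vector $w_f$ killed simultaneously by $d_k\ot f$ and $I_k\ot f$. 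Bracket manipulations (using that $d_{k+j}.w_f$ lands in a zero weight space for $j$ large) then yield a single vector $v$ with $\mcal{HV}_i\ot f.v=0$ for all $i\geq N$ and all $f\in B$, and Lemma 1.6 of \cite{OM} --- an elementary lemma about irreducible graded modules, not the full classification --- concludes that $V$ is a highest weight module. To salvage your approach you would need either to prove your boundedness claim from scratch (essentially redoing Mathieu-level work for $\mcal{L_B}$) or to replace it by structural information valid for non-irreducible $Vir$-restrictions, which is exactly what Martin--Piard provides.
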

	\begin{proof}
		Let $V$ be an irreducible  non-uniformly Harish-Chandra module for $\mcal{L_B}$. Let $W$ be the minimal Vir-submodule of $V$ such that $V/W$ is trivial $Vir$-module and $T$ be the maximal trivial $Vir$ submodule of $V$. Then by [\cite{MP}, Theorem 3.4], there exists a Vir-module decomposition of $\overline W=W/T \simeq \ov W^- \op \ov W^0 \op \ov W^+,$ where weights of $\ov W^+,\ov W^0, \ov W^-$ are respectively bounded above, uniformly bounded and bounded below. Without loss of generality assume that $\ov W^+ \neq 0$. We denote elements of $\ov W$ as $\ov w$ for $w \in W$.\\
		Since $V$ is irreducible and $C$ is central, hence $C$ acts as a scalar, say $c$. If $c\neq 0$, then $T=0$ and $W=V$. On the other hand if $c=0$ and maximal weight of $W^+ $ is zero with $w \in W$ such that $\ov w$ is the non-zero vector of weight zero. Then $U(Vir)v/U(Vir)v \cap T \subseteq \ov W$ is a non-trivial highest weight module of highest weight zero. Since its simple quotient is trivial, it contain highest weight vector of non-zero highest weight. Thus in any case ($c=0$ or $c \neq 0$) we can choose $w \in W$ such that $\ov w$ is a non-zero highest weight vector of highest weight $\la \neq 0$.\\
		Let $M=U(Vir)w$, then $M/M\cap T$ is a nontrivial highest weight $Vir$-submodule of highest weight $\la$ in $\ov W^+$. Let $M'$ be the maximal $Vir$-submodule of $M$ such that $M'_\la = 0$. Then $M\cap T \subseteq M'$ and $M/M'$ is isomorphic to non-trivial simple $Vir$-module $V(c,\la)$. Since $V(c,\la)$ is not uniformly bounded there exists a $k \in \N$ such that $dim(M/M')_{\la -k} >2dim M_\la$. Then for any $f \in B$ there exists $w_f \in M_{\la -k} \setminus M'_{\la -k}$ such that $ d_k \ot f.w_f=0=I_k \ot f.w_f$. Therefore for $N >>0$ and $j >N$ we have $d_{k+j}.w_f \in M_{\la+j}=0$. Now we get
		$$0=[d_{k+j},d_k\ot f].w_f=-jd_{2k+j}\ot f.w_f   $$
		$$ 0=[d_{k+j},I_k\ot f].w_f=kI_{2k+j}\ot f.w_f. $$
		Since $w_f \in	 M_{\la -k} \setminus M'_{\la -k}$, there exists $z \in \C$ and $i_1, \dots,i_r \in \N$ such that $i_1+\dots+i_r=k$ and $v=zd_{i_1}\dots d_{i_r}.w_f$. Hence we have 
		$$ d_{2k+j}\ot f.v=zd_{2k+j}\ot f d_{i_1}\dots d_{i_r}.w_f=0$$ 
		$$ I_{2k+j}\ot f.v=zI_{2k+j}\ot f d_{i_1}\dots d_{i_r}.w_f=0 ,$$
		the above two expressions are zero due to the fact that when one shift $	d_{2k+j}\ot f$ and $I_{2k+j}\ot f$ from left to right it induces sum of terms with the right most term of each members of the summation are $d_{2k+j+r}.w_f$ and $I_{2k+j+r}.w_f$ for some $r \in \N$ which acts trivially on $w_f$. Thus we have $\mcal{HV}_{i} \ot f.v=0$ for all $i \geq N$, $f \in B$, N is large enough. This completes the proof by Lemma 1.6 of \cite{OM}.
	\end{proof}
	One can observe that highest weight modules $V(\phi)$ may not be Harish-Chandra module for all $\phi$. The following theorem provide a necessary sufficient condition for $V(\phi)$ to be Harish-Chandra module.
	\begin{theorem}\label{tn4.2}
		The irreducible highest weight module $V(\phi)$ is Harish-Chandra module if and only if there exists a cofinite ideal $J$ of $B$ such that $\phi((\mcal{HV})_0 \ot J)=0$.  
	\end{theorem}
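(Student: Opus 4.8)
The plan is to prove the two implications separately. I begin with the sufficiency direction, which I will reduce to a finite-dimensional base. Assume $\phi((\mcal{HV})_0\ot J)=0$ for some cofinite ideal $J$. First note that $\mcal{HV}\ot J$ is an ideal of $\mcal{L_B}$, since $[\mcal{HV}\ot B,\mcal{HV}\ot J]\subseteq\mcal{HV}\ot J$ because $J$ is an ideal of $B$. As $\phi$ annihilates $(\mcal{HV})_0\ot J=\mcal{HV}^0\ot J$, it descends to a one-dimensional representation $\bar\phi$ of $(\mcal{HV}\ot(B/J))^0$. I would then form the irreducible highest weight module $V(\bar\phi)$ for the map algebra $\mcal{HV}\ot(B/J)$ over the finite-dimensional base $B/J$, and observe that its Verma module $M(\bar\phi)$ is already Harish-Chandra: by PBW the weight space of weight $\la-n$ has dimension equal to that of the degree $-n$ component of $U((\mcal{HV}\ot(B/J))^-)$, which is finite since each graded piece $(\mcal{HV}\ot(B/J))_{-k}$ has dimension $2\dim(B/J)$ and only finitely many of them contribute to total degree $-n$. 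Pulling $V(\bar\phi)$ back along the quotient map $\mcal{L_B}\to\mcal{HV}\ot(B/J)$ produces an irreducible highest weight $\mcal{L_B}$-module whose highest weight functional is exactly $\phi$; by uniqueness of the irreducible highest weight module with a given highest weight this pullback is $V(\phi)$, which is hence Harish-Chandra.

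For necessity, assume $V(\phi)$ is Harish-Chandra, so every weight space $V_{\la-n}$ is finite-dimensional, where $\la$ is the top $d_0\ot1$-weight. The tools are the linear maps $B\to V_{\la-n}$ defined by $b\mapsto(d_{-n}\ot b)v_\phi$ and $b\mapsto(I_{-n}\ot b)v_\phi$, whose kernels $K_n^d,K_n^I$ are cofinite subspaces of $B$. For $b\in K_n^d$ we have $(d_{-n}\ot b)v_\phi=0$ in $V(\phi)$; applying a raising operator $d_n\ot c$ or $I_n\ot c$ and using $(d_n\ot c)v_\phi=(I_n\ot c)v_\phi=0$, each such product collapses through one bracket to $\phi(h\ot cb)\,v_\phi$ with $h\in(\mcal{HV})_0$, so $v_\phi\neq0$ forces $\phi(h\ot cb)=0$ (and symmetrically for $b\in K_n^I$). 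The relevant structure constants are $[d_n,d_{-n}]=-2n\,d_0+\tfrac{n^3-n}{12}C$, $[I_n,d_{-n}]=-n\,I_0-(n^2-n)C_D$, $[d_n,I_{-n}]=-n\,I_0+(n^2+n)C_D$ and $[I_n,I_{-n}]=n\,C_I$.

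Writing $J_1=BK_1^d$, $J_1'=BK_1^I$ and $J_2=BK_2^d$ for the associated cofinite ideals, the case $n=1$ yields $\phi(d_0\ot x)=\phi(I_0\ot x)=0$ for $x\in J_1$, where the $C$- and $C_D$-coefficients vanish since $n^3-n=n^2-n=0$, together with $\phi(C_I\ot x)=0$ and $-\phi(I_0\ot x)+2\phi(C_D\ot x)=0$ for $x\in J_1'$. On $J_1\cap J_1'$ the last relation then forces $\phi(C_D\ot x)=0$, and on $J_1\cap J_2$ the $n=2$ relation $-4\phi(d_0\ot x)+\tfrac12\phi(C\ot x)=0$ forces $\phi(C\ot x)=0$. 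Thus $J:=J_1\cap J_1'\cap J_2$ is a cofinite ideal with $\phi((\mcal{HV})_0\ot J)=0$, as required.

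I expect the main obstacle to be this bookkeeping rather than any single delicate estimate. The brackets $[I_n,d_{-n}]$ and $[d_n,I_{-n}]$ couple $I_0$ with $C_D$, while $[d_n,d_{-n}]$ couples $d_0$ with $C$, so no single weight space and no single $n$ isolates all five generators; the crux is to exploit $n=1$, where the coefficients $n^3-n$ and $n^2-n$ vanish, to separate $d_0$, $I_0$ and $C_I$, and then to recover $C_D$ and $C$ by feeding back the vanishing already obtained before intersecting the finitely many cofinite ideals. A point to check carefully is that $C\ot b$, $C_D\ot b$, $C_I\ot b$ and $I_0\ot b$ are central in $\mcal{L_B}$ for every $b\in B$, which guarantees that the raising-operator computations land in $\C v_\phi$ and that these generators act by the scalars $\phi(\,\cdot\,)$.
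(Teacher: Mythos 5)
Your proposal is correct in both directions, but the sufficiency half takes a genuinely different route from the paper's. For necessity, the two arguments share the same engine: finite-dimensionality of low weight spaces gives cofinite kernels of the maps $b\mapsto (d_{-n}\ot b)v_\phi$, $b\mapsto (I_{-n}\ot b)v_\phi$, and raising operators applied to these vanishing vectors force $\phi$ to die on $(\mcal{HV})_0\ot J$. The paper works entirely in degree $-2$: it sets $J=\ker T_1\cap \ker T_2$ with $T_1(f)=d_{-2}\ot f.v_\phi$, $T_2(f)=I_{-2}\ot f.v_\phi$, shows this intersection is \emph{already} an ideal by bracketing with $d_0\ot g$, and extracts all five vanishing conditions from $d_2$, $d_1^2$ and $I_2$; in particular it uses the quadratic element $d_1^2$ to get $\phi(d_0\ot f)=0$, where you instead use the $n=1$ kernels (exploiting that $n^3-n$ and $n^2-n$ vanish at $n=1$) and then pass to the generated ideals $BK_n$ before intersecting. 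Both bookkeepings are valid. For sufficiency, the paper argues directly on $V(\phi)$: by downward induction, each $d_k\ot f.v_\phi$ and $I_k\ot f.v_\phi$ ($f\in J$) is annihilated by $d_1\ot B$, $d_2\ot B$, $I_1\ot B$, which generate $\mcal{L_B}^+$, hence is a singular vector of non-maximal weight and therefore zero; then $W=\{v: \mcal{HV}\ot J.v=0\}$ is a nonzero submodule, so $\mcal{HV}\ot J.V(\phi)=0$ and $V(\phi)=U(\mcal{HV}^-\ot B/J).v_\phi$ has finite-dimensional weight spaces. You instead construct the irreducible highest weight module over $\mcal{HV}\ot (B/J)$, verify via PBW that it is Harish-Chandra, pull it back along the quotient, and identify it with $V(\phi)$ by uniqueness of the irreducible highest weight module with highest weight $\phi$. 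Both proofs ultimately show the action factors through $\mcal{HV}\ot(B/J)$; the paper's induction yields the explicit annihilation statement and needs the generation of $\mcal{HV}^+$ by $d_1,d_2,I_1$, while your construction-plus-uniqueness argument is shorter and sidesteps that generation fact, at the cost of invoking the (standard, and implicit in the paper's definition of $V(\phi)$ as $M(\phi)/N(\phi)$) uniqueness of irreducible highest weight quotients.

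One small imprecision worth fixing: your closing remark credits centrality of $C\ot b$, $C_D\ot b$, $C_I\ot b$, $I_0\ot b$ for the collapse of the raising-operator computations. Those elements are indeed central, but centrality is not what the argument uses, and $d_0\ot cb$, which also appears in your brackets, is \emph{not} central. What makes the computations work is that the top graded component of $V(\phi)$ is $\C v_\phi$ and that every $h\ot x\in \mcal{L_B}^0$ acts on $v_\phi$ by the scalar $\phi(h\ot x)$, by the very construction of the Verma module and its irreducible quotient. This does not affect the validity of the derivations themselves.
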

	\begin{proof}
		Assume that $V(\phi)$ is a Harish-Chandra module. Let us define $T_1,T_2:B \to V(\phi)_{\phi(d_0)-2} $ by 
		$$T_1(f)=d_{-2}\ot f.v_\phi , \hspace{1cm} T_2(f)=I_{-2}\ot f.v_\phi. $$ Clearly $T_1,T_2$ are linear maps. Let $J=ker T_1\cap kerT_2$. First we assert that $J$ is an ideal of $B$. Since $d_0 \ot B$ leaves the weight spaces invariant and  $V(\phi)_{\phi(d_0)}$ spanned by $v_\phi$, hence for $f \in J$ and $g \in B$ we have
		$$0=[d_0\ot g,d_{-2}\ot f].v_\phi =-2d_{-2}\ot fg.v_\phi $$
		$$ 0=[d_0\ot g,I_{-2}\ot f].v_\phi =-2I_{-2}\ot fg.v_\phi,  $$
		This proves that $J$ is an ideal of $B$. Moreover both $kerT_1$ and $kerT_2$ are cofinite subspaces of $B$ implies that $J$  is a cofinite ideal. Now we prove that $\phi((\mcal{HV})_0 \ot J)=0$. Fix some $f \in J$, now consider following relations:
		\begin{align}
			0=d_2.	d_{-2}\ot f.v_\phi=(-4d_0\ot f+\frac{1}{2}C\ot f).v_\phi\\
			0=d_1^2.	d_{-2}\ot f.v_\phi=6d_0\ot f.v_\phi\\
			0=I_2.	d_{-2}\ot f.v_\phi=(-I_0\ot f-2C_D\ot f).v_\phi\\
			0=d_2.	I_{-2}\ot f.v_\phi=(-2I_0\ot f+6C_D\ot f).v_\phi\\
			0=[I_2,I_{-2}\ot f].v_\phi=C_I\ot f.v_\phi.
		\end{align}
		From the above equations it is clear that $\phi((\mcal{HV})_0 \ot J)=0$.\\ 
		Conversely assume that there exists a cofinite ideal $J$ such that $\phi((\mcal{HV})_0 \ot J)=0$. To complete the proof it is sufficient to prove that $\mcal{HV}\ot J.V(\phi)=0$. Because then $V(\phi)=U((\mcal{HV}^-\ot B/J).v_\phi$ and $J$ is cofinite implies that $V(\phi)$ is Harish-Chandra module. It is clear that $d_n \ot f.v_\phi=0$ and $I_n \ot f.v_\phi=0$ for all $n  >0$ and $f \in J$. Assume that $d_n \ot f.v_\phi=I_n \ot f.v_\phi=0$ for all $n > k, f \in J$, $k \in \Z$. Now for all $b \in B$ we have
		\begin{align}
			d_1 \ot b.(d_{k}\ot f.v_\phi)= (k-1)d_{k+1}\ot bf.v_\phi=0 \\ 
			d_2 \ot b.(d_{k}\ot f.v_\phi)=(k-2)d_{k+2}\ot bf.v_\phi+\frac{1}{2}\delta_{2,-k}C\ot bf.v_\phi=0\\
			I_1 \ot b.(d_{k}\ot f.v_\phi)=-I_{k+1}\ot bf.v_\phi=0.
		\end{align}
		Note that, as a Lie algebra $\mcal{HV}^+$ is generated by $d_1,d_2,I_1$. Therefore $d_k \ot f.v_\phi$ is a highest weight vector and hence $d_k \ot f.v_\phi=0$. Hence by induction we have $d_n \ot J.v_\phi =0$, for all $n \in \Z$. Similarly we have that $I_n \ot J.v_\phi=0$, for all $n \in \Z$. Therefore we have $\mcal{HV}\ot J.v_\phi=0$. Now consider $W=\{v \in V:\mcal{HV}\ot J.v=0\}$, which is a non-zero $\mcal{L_B}$ submodule of $V$. But $V$ is irreducible so $W=V$. This completes the proof.

	\end{proof}
	\begin{corollary}
		Let $B$ be a finite dimensional algebra. Then all highest  weight modules are Harish-Chandra modules.
	\end{corollary}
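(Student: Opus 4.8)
The plan is to read this off directly from Theorem \ref{tn4.2}. That theorem asserts that the irreducible highest weight module $V(\phi)$ is a Harish-Chandra module if and only if $B$ admits a cofinite ideal $J$ with $\phi((\mcal{HV})_0 \ot J)=0$. The crucial observation is that when $B$ is finite dimensional, every ideal of $B$ is automatically cofinite, since $\dim_\C(B/J) \le \dim_\C B < \infty$ for any ideal $J$. In particular the zero ideal $J=\{0\}$ is cofinite, and for this choice $(\mcal{HV})_0 \ot J = \{0\}$, so the requirement $\phi((\mcal{HV})_0 \ot J)=0$ is satisfied vacuously. Invoking the converse direction of Theorem \ref{tn4.2} then immediately gives that $V(\phi)$ is a Harish-Chandra module for every $\phi$.

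It is worth recording the direct reason why this works, which also covers arbitrary (not necessarily irreducible) highest weight quotients of $M(\phi)$. By the Poincar\'e--Birkhoff--Witt theorem and the triangular decomposition $\mcal{L_B}=\mcal{L_B}^-\op\mcal{L_B}^0\op\mcal{L_B}^+$, the Verma module $M(\phi)$ is isomorphic as a graded vector space to $U(\mcal{HV}^-\ot B)$. Fixing a basis $\{b_1,\dots,b_N\}$ of $B$, in each negative degree the space $\mcal{HV}^-\ot B$ is finite dimensional, spanned by the $2N$ elements $d_{-i}\ot b_j$ and $I_{-i}\ot b_j$. Hence the weight space of $M(\phi)$ of weight $\phi(d_0)-n$ is spanned by the finitely many ordered monomials in these generators of total degree $n$, so it is finite dimensional. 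Since any highest weight module is a quotient of $M(\phi)$, all of its weight spaces are finite dimensional as well, and therefore every highest weight module is a Harish-Chandra module.

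I do not expect any genuine obstacle here: the entire content of the corollary is that finite dimensionality of $B$ forces the cofiniteness hypothesis of Theorem \ref{tn4.2} to hold for free, and the only thing to verify is that the zero ideal of a finite dimensional algebra is cofinite. The dimension count in the second paragraph is routine and merely makes the underlying finiteness transparent, so I would present the argument via Theorem \ref{tn4.2} as the main line and keep the Poincar\'e--Birkhoff--Witt remark as the conceptual justification.
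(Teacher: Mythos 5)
Your main line of argument is exactly the paper's intended deduction: the corollary appears immediately after Theorem \ref{tn4.2} with no separate proof, precisely because when $B$ is finite dimensional the zero ideal is cofinite and the condition $\phi((\mcal{HV})_0 \ot J)=0$ holds trivially for $J=\{0\}$, so the converse direction of the theorem applies to every $\phi$. Your second paragraph, however, is a genuinely worthwhile addition rather than mere decoration: Theorem \ref{tn4.2} as stated concerns only the irreducible modules $V(\phi)$ (its converse proof invokes irreducibility when passing from $\mcal{HV}\ot J.v_\phi=0$ to $\mcal{HV}\ot J.V=0$), whereas the corollary claims the conclusion for \emph{all} highest weight modules. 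The Poincar\'e--Birkhoff--Witt dimension count --- that $\mcal{HV}^-\ot B$ has $2\dim B$-dimensional graded pieces, so every weight space of $M(\phi)$, and hence of any quotient of $M(\phi)$, is finite dimensional --- is what actually covers the non-irreducible case, and it does so without needing the theorem at all. So your proof is correct and, if anything, justifies the corollary in greater generality than the route the paper implicitly relies on.
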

	The following theorem follows from the similar proof of Theorem 5.3 \cite{1} with the help of Theorem \ref{tn4.2}.
	
	\begin{theorem}
		Any irreducible highest weight Harish-Chandra module is a tensor product of irreducible generalized evaluation highest weight Harish-Chandra modules. 
	\end{theorem}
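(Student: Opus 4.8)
The plan is to reduce the statement to a factorization over a finite-dimensional base and then split that base into local pieces. Let $V(\phi)$ be an irreducible highest weight Harish-Chandra module with highest weight vector $v_\phi$. By Theorem \ref{tn4.2} there is a cofinite ideal $J$ of $B$ with $\phi((\mcal{HV})_0 \ot J)=0$, and the proof of that theorem in fact establishes the stronger statement $\mcal{HV}\ot J.V(\phi)=0$. Hence the action factors through the finite-dimensional Lie algebra $\mcal{HV}\ot \ov B$, where $\ov B=B/J$, and from now on I would replace $B$ by $\ov B$.

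Next I would decompose $\ov B$ into local factors. Since $\ov B$ is a finite-dimensional commutative unital algebra it is Artinian, so by the structure theorem there are finitely many maximal ideals $\mf m_1,\dots,\mf m_k$ of $B$ containing $J$ together with orthogonal idempotents giving $\ov B\cong \prod_{i=1}^{k}\ov B_i$, with $\ov B_i$ local and $\ov B_i\cong B/\mf m_i^{n_i}$ for suitable $n_i\in\N$. Because $\ov B_i\ov B_j=0$ for $i\ne j$, the corresponding ideals satisfy $[\mcal{HV}\ot \ov B_i,\mcal{HV}\ot \ov B_j]=0$, so $\mcal{HV}\ot \ov B=\bgop_{i=1}^{k}(\mcal{HV}\ot \ov B_i)$ is a direct sum of \emph{commuting} ideals and consequently $U(\mcal{HV}\ot\ov B)\cong \bigotimes_{i=1}^{k}U(\mcal{HV}\ot \ov B_i)$.

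Now I would produce the tensor decomposition. Using the highest weight condition, set $V_i=U(\mcal{HV}^-\ot\ov B_i).v_\phi$. Since the ideals commute and $v_\phi$ generates $V(\phi)$, multiplication induces a surjective homomorphism $V_1\ot\cdots\ot V_k\to V(\phi)$ of $\mcal{HV}\ot\ov B$-modules, where the algebra acts through its $k$ commuting summands. Each $V_i$ is a highest weight module for $\mcal{HV}\ot\ov B_i$ whose highest weight is the restriction of $\phi$; passing to the unique irreducible quotient $L_i$, a Schur/density argument carried out weight space by weight space — legitimate here because the Harish-Chandra hypothesis makes every weight space finite-dimensional — shows $V(\phi)\cong \bigotimes_{i=1}^{k}L_i$ with each $L_i$ irreducible. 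By construction each $L_i$ is annihilated by $\mcal{HV}\ot\mf m_i^{n_i}$, hence is an irreducible generalized single point evaluation highest weight module, and it is Harish-Chandra since its weight spaces occur as tensor factors of the finite-dimensional weight spaces of $V(\phi)$. This is precisely the asserted decomposition, following the scheme of Theorem 5.3 of \cite{1}.

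The main obstacle I expect is the tensor factorization step itself: over the infinite-dimensional Lie algebra $\mcal{HV}\ot\ov B$ one must verify both that the factors $L_i$ are irreducible and that $V(\phi)$ is the \emph{full} tensor product rather than a proper subquotient. The classical argument for a direct sum of ideals is a density/Schur argument that relies on finite dimensionality; its replacement here is the finiteness of weight multiplicities supplied by the Harish-Chandra condition, which together with the commuting-ideal structure lets one match the two sides one weight space at a time.
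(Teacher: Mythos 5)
Your proposal is correct and takes essentially the same approach as the paper, whose proof consists precisely of the argument you reconstruct: use Theorem \ref{tn4.2} (together with the annihilation statement $\mcal{HV}\otimes J.V(\phi)=0$ established in its proof) to factor the action through $\mcal{HV}\otimes (B/J)$, split $B/J$ into local Artinian factors supported at single points, and obtain the tensor factorization from the resulting decomposition into commuting ideals by the highest weight plus Schur/density argument, exactly as in Theorem 5.3 of \cite{1}. Two harmless slips that do not affect the argument: $\mcal{HV}\otimes \ov B$ is infinite-dimensional (only $\ov B$ is finite-dimensional), and each local factor $\ov B_i$ is in general a proper quotient of $B/\mf m_i^{n_i}$ rather than isomorphic to it, but annihilation by $\mcal{HV}\otimes \mf m_i^{n_i}$ — all that the definition of a generalized evaluation module requires — still holds.
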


	After summarizing all the results of section 3 and 4 we have the following theorem.
	
	\begin{theorem}
		Any irreducible Harish-Chandra module for $\mcal{L_B}$ is one of the following:\\
		(1) a single point evaluation module corresponding to a $\mcal{HV}$-module of the intermediate series, or\\
		(2) a finite tensor product of single point generalized evaluation modules corresponding to irreducible highest weight modules (or lowest weight modules).
	\end{theorem}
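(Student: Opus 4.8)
The plan is to obtain this statement as a direct synthesis of the results of Sections~3 and~4, organized around the single dichotomy of whether the module is uniformly bounded. So I would begin by fixing an arbitrary irreducible Harish-Chandra module $V$ for $\mcal{L_B}$. Since $V$ is Harish-Chandra, all of its weight spaces $V_\la$ are finite dimensional, and exactly one of two cases occurs: either $\dim V_\la$ is bounded uniformly in $\la$, or it is not. The whole proof then consists of routing each case into the appropriate theorem already established.

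If $V$ is uniformly bounded, I would simply invoke the final theorem of Section~3, which identifies every uniformly bounded irreducible Harish-Chandra $\mcal{L_B}$-module as a single point evaluation module built from an intermediate series module of $\mcal{HV}$. This is precisely alternative~(1), and nothing further is needed in this case.

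If $V$ is not uniformly bounded, the first theorem of Section~4 forces $V$ to be a highest weight module or a lowest weight module. In the highest weight case, writing $V \cong V(\phi)$, I would feed this into the preceding theorem, which decomposes any irreducible highest weight Harish-Chandra module as a finite tensor product of irreducible generalized single point evaluation highest weight Harish-Chandra modules; this yields alternative~(2). The lowest weight case is handled by the identical chain of arguments carried out with the roles of $\mcal{HV}^+$ and $\mcal{HV}^-$ interchanged, producing a finite tensor product of generalized single point evaluation lowest weight modules.

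Because all the substantive work is already done upstream, I do not anticipate a genuine obstacle in the assembly itself; the one point deserving care is the lowest weight case. One cannot simply transport the highest weight result across via a degree-reversing automorphism of $\mcal{HV}$: the cocycle term $(n^2+n)C_D$ in $[d_n,I_m]$ is not invariant under $n \mapsto -n$ (it turns into $(n^2-n)C_D$), so there is no Lie algebra automorphism sending $d_n \mapsto \pm d_{-n}$, $I_n \mapsto \pm I_{-n}$. Consequently I would verify explicitly that the arguments of Section~4 and of the tensor-product theorem are symmetric under the swap $\mcal{HV}^+ \leftrightarrow \mcal{HV}^-$ — which they are, since they rely only on the triangular decomposition together with the fact that $\mcal{HV}^+$ is generated by $d_1,d_2,I_1$ and, symmetrically, $\mcal{HV}^-$ by $d_{-1},d_{-2},I_{-1}$ — rather than appealing to a nonexistent involution.
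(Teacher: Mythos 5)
Your proposal is correct and coincides with the paper's own argument: the theorem is stated there precisely as a summary assembling the uniformly bounded classification at the end of Section~3, the dichotomy theorem at the start of Section~4, and the tensor product decomposition of irreducible highest weight Harish-Chandra modules. Your caution about the lowest weight case is reasonable but slightly overstated: a degree-reversing automorphism of $\mcal{HV}$ does exist provided one lets it act nontrivially on the centre, namely $d_n \mapsto -d_{-n}$, $I_n \mapsto I_{-n}$ ($n \neq 0$), $I_0 \mapsto I_0 - 2C_D$, $C \mapsto -C$, $C_D \mapsto -C_D$, $C_I \mapsto -C_I$, so either route (this automorphism extended to $\mcal{L_B}$, or your direct symmetry check) handles lowest weight modules.
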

	\section{Non-weights modules}
	In this section we construct a class of non-weights modules for $\mcal{L_B}$ whose restrictions on $U(d_0)$ are free rank one. For this section we assume that  $B=\C[b_1,b_2,\dots,b_k]$. For scalars $\mu_1, \dots, \mu_k \in \C$ we denote ${\boldsymbol\mu}=(\mu_1,\dots,\mu_k) \in \C^k$. Also for ${\bf r}=(r_1,\dots,r_k) \in \Z^k_{\geq 0}$ we denote $b^{\bf r}=b_1^{r_1}.\dots.b_k^{r_k}$, $\bs \mu^{\bf r}=\mu_1^{r_1}.\dots.\mu_k^{r_k}$ and $|\bf r|$$=\dis{\sum_{i=1}^{k}}r_i$. Before going to the map Virasoro case we first recall from \cite{TZ,LZ,GLZ2} the known result for $Vir$-module which are free of rank 1 over $U(d_0)$. 
	\begin{theorem}\label{t5.1}
		Let $M$ be a $U(Vir )$-module such that restriction of $U(Vir )$ to $U(d_0)$ is of free rank 1. Then $M \simeq \Omega(\la,\al)$, for some $\al \in \C, \la \in \C \setminus \{0\}$. As a vector space $\Omega(\la,\al)=\C[t]$ and actions of elements of $Vir$ on $\Omega(\la,\al)$ are given by:
		\begin{align}
			d_n.f(t)=\la^{n} f(t-n) (t-n\al), \\
			C.f(t)=0.
		\end{align}	
		Furthermore $M$ is irreducible if and only if $ \al \in \C \setminus\{0\}$. If $\al =0$ it has only submodule $t\Omega(\la,0)$.	  
	\end{theorem}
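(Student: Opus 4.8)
The plan is to realize the abstract module concretely on $\C[t]$ and then read off the action of every $d_n$ from the Lie brackets of $Vir$.

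First I would fix a free generator $v$ of $M$ over $U(d_0)=\C[d_0]$, so that $M=\C[d_0]v$; since $\C[d_0]\subseteq U(Vir)$ this already gives $M=U(Vir)v$, i.e. $M$ is cyclic. Identifying $M$ with $\C[t]$ via $h(d_0)v\leftrightarrow h(t)$, the operator $d_0$ becomes multiplication by $t$. The first structural step is to show that each $d_n$ acts as $d_n.f(t)=g_n(t)f(t-n)$ for a unique $g_n\in\C[t]$ with $g_0(t)=t$. This follows from $[d_0,d_n]=nd_n$: writing $\sigma$ for the shift $\sigma f(t)=f(t-1)$ one has $[\,t,\sigma^n\,]=n\sigma^n$, so the relation forces $d_n\sigma^{-n}$ to commute with multiplication by $t$, and any operator on $\C[t]$ commuting with multiplication by $t$ is itself multiplication by a polynomial. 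In the same way, since $C$ is central it commutes with $d_0$ and hence acts as multiplication by some $c(t)$; commuting $C$ past an arbitrary $d_n$ gives $c(t-n)=c(t)$, so $c(t)=c$ is a constant.

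Next I would convert the brackets into functional equations for the $g_n$. Writing out $[d_m,d_n]=(n-m)d_{m+n}+\de_{m,-n}\frac{m^3-m}{12}C$ on $\C[t]$ yields
\begin{align*}
 g_m(t)g_n(t-m)-g_n(t)g_m(t-n)=(n-m)\,g_{m+n}(t),\qquad m+n\neq0,
\end{align*}
together with the central relations
\begin{align*}
 g_n(t)g_{-n}(t-n)-g_{-n}(t)g_n(t+n)=-2nt+\tfrac{n^3-n}{12}\,c .
\end{align*}
Granting the degree bound discussed below, every $g_n$ is affine, say $g_n(t)=a_nt+b_n$. Comparing the $t$- and constant-coefficients in the first equation gives $a_{m+n}=a_ma_n$ (for $m\neq n$) and $n\,a_mb_n-m\,a_nb_m=(n-m)b_{m+n}$; together with $a_0=1$, $b_0=0$ these force $a_n=\la^n$ with $\la=a_1\neq0$ and $b_n=-n\al\la^n$, that is $g_n(t)=\la^n(t-n\al)$. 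Feeding this into the central relation leaves $-2nt=-2nt+\frac{n^3-n}{12}c$, so $c=0$. Hence $M\simeq\Omega(\la,\al)$, and a direct check shows conversely that these formulas do define a $Vir$-module.

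The main obstacle is exactly the degree bound. A priori each $g_n$ is an arbitrary polynomial, and cancellation of leading terms gives only the subadditive estimate $\deg g_{m+n}\le\deg g_m+\deg g_n-1$ (for $m\neq n$) and the lower bound $\deg g_n+\deg g_{-n}\ge2$ coming from the central relation; these alone do not force $\deg g_1=\deg g_{-1}=1$, since for example $g_1(t)g_{-1}(t-1)-g_{-1}(t)g_1(t+1)=-2t$ is also consistent with $\deg g_1=0,\ \deg g_{-1}=2$. Ruling out the higher-degree possibilities requires comparing subleading coefficients across several bracket relations simultaneously, as carried out in \cite{TZ,LZ,GLZ2}; this is the technical heart, after which the recursions pin down the $g_n$ uniquely as above. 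Finally, for the irreducibility statement I would work inside $\Omega(\la,\al)=\C[t]$ and use that $\la^{-n}d_n$ acts by $f(t)\mapsto(t-n\al)f(t-n)$, which raises degree by one with top coefficient independent of $n$. Given a nonzero submodule $N$ and $0\neq f\in N$, a finite combination such as $(\la^{-1}d_1-2d_0+\la\,d_{-1}).f$ kills the top two coefficients and so lies in $N$ with strictly smaller degree; iterating lowers the degree, and on a degree-one element the resulting constant is a nonzero multiple of $\al$. Thus for $\al\neq0$ one reaches a nonzero constant, whence $1\in N$, then $t\in N$, and inductively all of $\C[t]$, so $N=M$ and $M$ is irreducible. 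For $\al=0$ every $d_n$ maps $\C[t]$ into $t\C[t]$, so $t\Omega(\la,0)=t\C[t]$ is a proper submodule; running the same degree-lowering argument inside $t\C[t]$ shows it is irreducible and is the unique nonzero proper submodule, which completes the proof.
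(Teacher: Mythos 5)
To frame the comparison first: the paper gives no proof of Theorem \ref{t5.1} at all --- it is recalled from \cite{TZ,LZ,GLZ2} and stated as known, with an immediate end-of-proof mark --- so your attempt can only be measured against the standard argument in those references, whose skeleton you reproduce correctly. Your reduction is sound: identifying $M$ with $\C[t]$ so that $d_0$ acts as multiplication by $t$, using $[d_0,d_n]=nd_n$ to see that $d_n$ acts as $f(t)\mapsto g_n(t)f(t-n)$ and that $C$ acts by a constant $c$, and translating the Virasoro brackets into your two functional equations; granting that each $g_n$ is affine, the coefficient comparison forcing $g_n(t)=\la^n(t-n\al)$ and then $c=0$ is correct. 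But the affineness of the $g_n$ is, as you yourself say, the technical heart, and you defer it wholesale to \cite{TZ,LZ,GLZ2}. As a self-contained proof the proposal is therefore incomplete precisely at its central step; it is a correct framework plus a citation, which in fairness is the same status as the paper's own treatment of this theorem.

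There is, however, a genuine error in your irreducibility argument, independent of the deferred lemma. Your degree-lowering operators are the symmetric combinations $T_n=\la^{-n}d_n-2d_0+\la^n d_{-n}$, which act by $T_nf=t\bigl(f(t-n)-2f(t)+f(t+n)\bigr)+n\al\bigl(f(t+n)-f(t-n)\bigr)$. For $\deg f=d$ the coefficient of $t^{d-1}$ in $T_nf$ is $n^2d(d-1+2\al)f_d$, which vanishes for every $n$ when $\al=(1-d)/2$; worse, the iteration can stall outright. For $\al=-\tfrac12$ and $f(t)=t^2+c$ one computes $T_nf=2n^2(1+2\al)t=0$ for all $n$, so no combination from your family lowers the degree of $f$, even though $\Omega(\la,-\tfrac12)$ is irreducible and such $f$ does generate $\C[t]$: for instance $\la^{-1}d_1f-d_0f=-\tfrac32 t^2+\tfrac{1+c}{2}$, which together with $f$ yields a nonzero constant whenever $c\neq-\tfrac14$, and $\la^{-2}d_2f-d_0f=-3\bigl(t^2-\tfrac{4+c}{3}\bigr)$ handles $c=-\tfrac14$. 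So the claim ``iterating lowers the degree'' is false as stated for $\al\in\{-\tfrac12,-1,-\tfrac32,\dots\}$, and the repair requires asymmetric combinations such as $\la^{-n}d_n-d_0$ (equivalently, arguing from an element of minimal degree in the submodule and showing that degree is zero). The same care is needed to substantiate your one-line claim that $t\Omega(\la,0)$ is irreducible and is the \emph{unique} nonzero proper submodule when $\al=0$: there one should also note that any nonzero submodule meets $t\C[t]$ nontrivially (apply any $d_n$), hence contains it.
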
 \qed \\
	Now we define a $Vir\ot B$-module action on $\C[t]$ by:
	\begin{align}
		d_n \ot b^{\bf r}.f(t)=\bs \mu^{\bf r}\la^{n-|{\bf r}|} f(t-n) (t-n\al), \\
		C\ot b^{\bf r}.f(t)=0
	\end{align}
	for all ${\bf r} \in \Z^k_{\geq 0}, n \in \Z, \bs \mu \in \C^k, \al, \la \in \C, f(t) \in \C[t]$. It is easy to see that this action define a $Vir \ot B$-module structure on $\C[t]$. We denote these modules by $\Omega(\la,\al,\bs \mu)$.

	\begin{theorem}\label{t5.2}
		Let $M$ be a $U(Vir \ot B)$-module such that restriction of $U(Vir \ot B)$ to $U(d_0)$ is free of rank 1. Then $M \simeq \Omega(\la,\al,\bs{\mu})$, for some $\al \in \C, {\bs\mu} \in \C^k, \la \in \C \setminus \{0\}$. Moreover $M$ is irreducible if and only if $ \al\neq 0$.  
	\end{theorem}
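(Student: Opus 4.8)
The plan is to convert the free-rank-one hypothesis into a concrete realization and then solve the resulting functional equations. First I would identify $M=\C[t]$ as a $U(d_0)$-module, where $d_0:=d_0\ot 1$ acts as multiplication by $t$. For each $n\in\Z$ and ${\bf r}\in\Z^k_{\geq 0}$ set $g_n^{\bf r}(t):=(d_n\ot b^{\bf r}).1\in\C[t]$. Since $[d_0,d_n\ot b^{\bf r}]=n\,(d_n\ot b^{\bf r})$, the relation $(d_n\ot b^{\bf r})\,t=(t-n)(d_n\ot b^{\bf r})$ holds as operators, whence $(d_n\ot b^{\bf r})p(t)=p(t-n)\,g_n^{\bf r}(t)$ for every $p\in\C[t]$; thus the entire action is encoded by the polynomials $g_n^{\bf r}$. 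Restricting to the subalgebra $Vir\ot 1\cong Vir$, the module $M$ is again free of rank $1$ over $U(d_0)$, so Theorem~\ref{t5.1} applies and yields $\la\in\C\setminus\{0\}$, $\al\in\C$ with $g_n^{\bf 0}(t)=\la^n(t-n\al)$ and $C\ot 1$ acting by $0$. This fixes $\la$ and $\al$ once and for all.

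Next I would extract functional equations by evaluating commutators on the generator $1$. Applying $[d_m\ot 1,\,d_n\ot b^{\bf r}]=(n-m)d_{m+n}\ot b^{\bf r}+\de_{m,-n}\tfrac{m^3-m}{12}C\ot b^{\bf r}$ to $1$ and using the displayed action gives, after a short expansion,
\begin{align*}
\la^m\big[(t-m\al)g_n^{\bf r}(t-m)-(t-n-m\al)g_n^{\bf r}(t)\big]=(n-m)g_{m+n}^{\bf r}(t)+\de_{m,-n}\tfrac{m^3-m}{12}c^{\bf r}(t),
\end{align*}
where $c^{\bf r}(t):=(C\ot b^{\bf r}).1$. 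Taking $m=n\neq 0$ (both right-hand terms vanish) forces $(t-n\al)g_n^{\bf r}(t-n)=(t-n-n\al)g_n^{\bf r}(t)$; substituting $t=n\al$ shows $(t-n\al)\mid g_n^{\bf r}(t)$, and writing $g_n^{\bf r}=(t-n\al)h$ reduces the relation to $h(t-n)=h(t)$, so $h$ is $n$-periodic and hence constant. Thus $g_n^{\bf r}(t)=c_n^{\bf r}(t-n\al)$ for constants $c_n^{\bf r}$ and all $n\neq 0$; the case $[d_1\ot b^{\bf r},d_{-1}\ot 1]=-2\,d_0\ot b^{\bf r}$ settles $n=0$ and gives $g_0^{\bf r}(t)=c_0^{\bf r}t$ as well.

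With the linear form in hand, substituting $g_n^{\bf r}(t)=c_n^{\bf r}(t-n\al)$ into the functional equation collapses all $t$-dependence through the identity $(t-m\al)(t-m-n\al)-(t-n-m\al)(t-n\al)=(n-m)(t-(m+n)\al)$, leaving the scalar recursion $c_{m+n}^{\bf r}=\la^m c_n^{\bf r}$ whenever $m+n\neq 0$ and $m\neq n$; taking $n=0$ yields $c_n^{\bf r}=\la^n c_0^{\bf r}$ for all $n$. Running the identical computation on $[d_m\ot b^{\bf r},d_n\ot b^{\bf s}]$ produces the multiplicativity $c_0^{{\bf r}+{\bf s}}=c_0^{\bf r}c_0^{\bf s}$, so $c_0^{\bf r}=\prod_i(c_0^{e_i})^{r_i}$; putting $\mu_i:=\la\,c_0^{e_i}$ gives $c_0^{\bf r}=\bs\mu^{\bf r}\la^{-|{\bf r}|}$ and hence $g_n^{\bf r}(t)=\bs\mu^{\bf r}\la^{n-|{\bf r}|}(t-n\al)$. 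Finally, choosing $m=-n$ with $|m|\geq 2$ makes the non-central terms of the functional equation cancel identically, forcing $\tfrac{m^3-m}{12}c^{\bf r}(t)=0$, i.e. $C\ot b^{\bf r}$ acts by $0$. These are exactly the defining formulas of $\Omega(\la,\al,\bs\mu)$, giving the asserted isomorphism.

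For the irreducibility claim, when $\al=0$ every $d_n\ot b^{\bf r}$ sends $\C[t]$ into $t\,\C[t]$, so $t\,\C[t]$ is a proper nonzero submodule and $M$ is reducible. When $\al\neq 0$, let $N\neq 0$ be a submodule and $f\in N$ of minimal degree $d$; since $\la^{-n}(d_n\ot 1)f=(t-n\al)f(t-n)\in N$ for every $n$, I would regard this as a polynomial in $n$ of degree $d+1$ whose coefficients again lie in $N$ (a Vandermonde argument over $d+2$ values of $n$), the coefficient of $n^{d+1}$ being the nonzero constant $(-1)^{d+1}\al\in N$; hence $1\in N$ and $N=U(d_0).1=M$, so $M$ is irreducible precisely when $\al\neq 0$. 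I expect the main obstacle to be the bookkeeping in the functional-equation step, specifically verifying the exact cancellations that produce the scalar recursion, the multiplicativity of $c_0^{\bf r}$, and the vanishing of the central terms, though each reduces to an elementary polynomial computation once the reduction $g_n^{\bf r}(t)=c_n^{\bf r}(t-n\al)$ is in place.
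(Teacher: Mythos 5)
Your proof is correct and follows essentially the same route as the paper: both restrict to $Vir \ot 1$ to invoke Theorem~\ref{t5.1}, and then determine the polynomials $(d_n \ot b^{\bf r}).1$ and $(C \ot b^{\bf r}).1$ by evaluating commutators on the cyclic vector, arriving at exactly the defining formulas of $\Omega(\la,\al,\bs\mu)$. The only differences are organizational rather than substantive: you derive a single general functional equation and specialize it (obtaining the linear form $c_n^{\bf r}(t-n\al)$ for all monomials ${\bf r}$ at once, then the recursion $c_n^{\bf r}=\la^n c_0^{\bf r}$ and the multiplicativity $c_0^{{\bf r}+{\bf s}}=c_0^{\bf r}c_0^{\bf s}$), whereas the paper runs a two-step induction (first powers $b^j$ of one generator via $[d_{-1}\ot b, d_1\ot b^s]$, then mixed monomials via brackets with $d_0\ot b_i^{r_i}$); likewise, for irreducibility when $\al\neq 0$ you give a self-contained Vandermonde argument where the paper simply cites Theorem~\ref{t5.1} for the restriction to $Vir$.
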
 
	\begin{proof}
		
		Consider $M$ as a $U(Vir)$ module, then by Theorem \ref{t5.1} we have $M\simeq \Omega(\la,\al)$. Fix some $b \in B$ and let $C\ot b.1=c(t)$. Note that for all $m \geq 1$, $\psi_m:\Omega(\la,\al) \to \Omega(\la,\al)$ defined by $\psi_m(f)=(C\ot b)^m.f$ is a $Vir$ module  homomorphism. Hence Image $\psi_m=c(t)^m\C[t]  $ is a submodule of $\Omega(\la,\al)$. But $\Omega(\la,\al)$ has only submodules $0,\Omega(\la,\al),t\C[t] $. Therefore we have $C\ot b.1$ acts as scalar on $M$ for all $b \in B$. It is easy to observe that 
		\begin{align}
			d_n \ot b.f(t)=	d_n \ot b.f(d_0).1 =f(t-n)d_n \ot b.1, \\
			C\ot b.f(t)=f(t)C\ot b.1,  
		\end{align}
		for all $n \in \Z, b \in B$. Hence to find the actions we need to determine the actions of $d_n \ot b.1$ and $C \ot b.1$ for all $n \in \Z, b \in B$.\\
		Let $d_1\ot b.1=g(t)$ for some fix $b \in B$.  Now consider the relation $[d_1,d_1\ot b].1=0$. This implies that 
		\begin{align}
			(t-\al)g(t-1)=(t-1-\al)g(t).
		\end{align}
		This implies that $(t-\al)|g(t)$, i.e $g(t)=(t-\al)A(t)$, for some $A(t) \in \C[t]$. Putting the value of $g(t)$ in equation (5.7) we have $A(t)=A(t-1)$. Hence we get $A(t)$ is a constant polynomial. Therefore $d_1\ot b.1=A_b(t-\al)$, for some constant $A_b$.  Now consider the relation $[d_{-1},d_1\ot b].1=2d_0\ot b.1$ and use the action of Theorem \ref{t5.1} to get $d_0 \ot b.1 = \la^{-1}A_bt$. From this using the bracket operation $[d_0 \ot b , d_n].1=nd_n\ot b.1$ we have $d_n \ot b.1 = A_b\la^{n-1}(t-n\al)$ for all $n \in \Z$.\\
		{\bf Claim :}  $d_n \ot b^j.1=A_b^j\la^{n-j}(t-n\al)$ for all $j \geq 0, n \in \Z$.\\
		Note that the claim is true for $j=0,1$. Assume that the claim is true for $j \leq s$. Then consider the relation 
		\begin{align*}
			2d_0\ot b^{s+1}.1 &=[d_{-1}\ot b, d_1 \ot b^s ].1 \cr
			&=d_{-1}\ot b.(A_b^s\la^{1-s}(t-\al))-d_1\ot b^s.(A_b\la^{-2}(t+\al))\cr
			&= A_b^{s+1}\la^{-1-s}(t+1-\al)(t+\al)-A_b^{s+1}\la^{-1-s}(t-1+\al)(t-\al) \cr
			&= A_b^{s+1}\la^{-1-s}(2t)
		\end{align*}
		Now using the bracket $[d_0\ot b^{s+1},d_n].1=nd_n\ot b^{s+1}.1$ we have $d_n \ot b^{s+1}=A_b^{s+1}\la^{n-s-1}(t-n\al)$. Hence by induction principal we have the claim.\\
		Therefore there exists scalars $\mu_i$ for $1 \leq i \leq k$ such that $d_n \ot b_i^j=\mu_i^j\la^{n-j}(t-n\al)$ for all $j \geq 0, n \in \Z$.\\
		{\bf Claim :} For $\bf r=$$(r_1,\dots,r_k) \in \Z^k_{\geq 0}$, $d_n \ot b^{\bf r}={\bs \mu}^{\bf r}\la^{n-|\bf r |} (t-n\al)  $ for all $n \in \Z$.\\ Consider the bracket operation $[d_0 \ot b_1^{r_1},d_n \ot b_2^{r_2}].1=nd_n \ot b_1^{r_1}b_2^{r_2}.1$. This gives us $d_n \ot b_1^{r_1}b_2^{r_2}.1= \mu_1^{r_1}\mu_2^{r_2}\la^{n-(r_1+r_2)}(t-n\al)$. Now we consider the bracket operation $[d_0 \ot b_3^{r_3},d_n \ot b_1^{r_1} b_2^{r_2}].1 =nd_n \ot b_1^{r_1}b_2^{r_2}b_3^{r_3}.1 $ and continue this process $n-1$ to get the desired claim.\\
		Moreover considering the bracket operation $[d_{-n},d_n \ot b^{\bf r}].1= 2nd_0\ot b^{\bf r}.1 -\frac{n^3-n}{12}C\ot b^{\bf r}.1$ (for some $n\geq 2$) we get that $C\ot b^{\bf r}.1=0$. This proves that $M\simeq \Omega(\la,\al,\bs \mu)$. Note that when $\al=0$, $t\Omega(\la,0,\bs \mu)$ is a submodule of $\Omega(\la,0,\bs \mu)$. Now by this fact and Theorem \ref{t5.1} we have $M$ is irreducible if and only if $\la,\al\neq 0$. 
		
	\end{proof}
	
	Now we intend towards the non-weight $\mcal{L_B}$ modules whose restrictions on $U(d_0)$ are free of rank 1. We record the following theorem from \cite{CGHW}.
	\begin{theorem}\label{t5.3}
		Let $M$ be a $U\mcal{HV} )$-module such that restriction of $U\mcal{HV} )$ to $U(d_0)$ is of free rank 1. Then $M \simeq \Omega(\la,\al,\be)$, for some $\al, \be \in \C, \la \in \C \setminus \{0\}$. As a vector space $\Omega(\la,\al)=\C[t]$ and actions of elements of $\mcal{HV}$ on $\C[t]$ are given by:
		\begin{align}
			d_n.f(t)=\la^{n} f(t-n) (t-n\al) \\
			I_n.f(t)=\be\la^nf(t-n)\\
			C.f(t)=C_D.f(t)=C_I.f(t)=0.
		\end{align}	
		Furthermore $M$ is irreducible if and only if $ \al \neq 0$ or $\be \neq 0$. If $\al=\be =0$ it has a simple submodule $t\Omega(\la,0,0)$.	  
	\end{theorem}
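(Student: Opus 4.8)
The plan is to follow the strategy of Theorem \ref{t5.2}, using the extra generators $I_n$, $C_D$, $C_I$ in place of the commutative factor $B$. First I would restrict $M$ to the Virasoro subalgebra $Vir=\mathrm{span}\{d_n,C\}$; since the $U(d_0)$-action is unaffected, $M$ is a $U(Vir)$-module which is free of rank one over $U(d_0)$, so Theorem \ref{t5.1} gives $M\simeq\Omega(\la,\al)$ for some $\la\in\C\setminus\{0\}$ and $\al\in\C$. Thus I may realize $M=\C[t]$ with $d_n.f(t)=\la^n f(t-n)(t-n\al)$, $C.f(t)=0$, and $d_0$ acting as multiplication by $t$. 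It remains to determine the action of $I_n$ $(n\in\Z)$, $C_D$ and $C_I$.

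Since $I_0,C_D,C_I$ lie in the centre $\mcal C$ of $\mcal{HV}$, each commutes with every $d_n$ and with $C$, so multiplication by $I_0.1$ (respectively $C_D.1$, $C_I.1$) is a $Vir$-endomorphism of $\Omega(\la,\al)$. Exactly as in Theorem \ref{t5.2} — inspecting the images of the powers of such an endomorphism, which must be one of the submodules $0,\C[t],t\C[t]$ of $\Omega(\la,\al)$ — I conclude that each of $I_0,C_D,C_I$ acts as a scalar, say $\be,\gamma,\varepsilon$. To handle general $I_n$ I would first reduce it to the single polynomial $q_n(t):=I_n.1$. From $[d_0,I_n]=nI_n$ and the fact that $d_0$ is multiplication by $t$ one derives the operator identity $I_n f(d_0)=f(d_0-n)I_n$, whence $I_n.f(t)=f(t-n)\,q_n(t)$ for all $f$.

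The core of the argument is to show $q_n=\be\la^n$. Applying $[d_{-1},I_1]=I_0$ to the vector $1$ and using the formulas above gives the functional equation $q_1(t+1)(t+\al)-(t-1+\al)q_1(t)=\be\la$; a comparison of the top-degree coefficients on the left shows that a nonconstant $q_1$ cannot satisfy it, so $q_1$ is constant, and then the equation forces $q_1=\be\la$. Substituting this into $[d_{n-1},I_1]=I_n$ for $n\ge 2$ and into $[d_{-n-1},I_1]=I_{-n}$ for $n\ge1$ and evaluating at $1$ yields $q_n=\be\la^n$ for every $n\in\Z$ by a direct computation. For the central scalars, I would first compute $q_{-1}=\be/\la$ from $[d_{-2},I_1]=I_{-1}$ (which involves no $C_D$); substituting the values of $q_1,q_{-1}$ into $[d_1,I_{-1}]=-I_0+2C_D$ applied to $1$ then forces $\gamma=0$, and substituting them into $[I_1,I_{-1}]=C_I$ applied to $1$ forces $\varepsilon=0$. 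Hence $C_D.f=C_I.f=0$ and $M\cong\Omega(\la,\al,\be)$.

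Finally, for irreducibility I would argue that every $\mcal{HV}$-submodule of $M$ is in particular a $Vir$-submodule. If $\al\neq0$ then $\Omega(\la,\al)$ has no proper nonzero $Vir$-submodule by Theorem \ref{t5.1}, so $M$ is irreducible. If $\al=0$, the only proper nonzero $Vir$-submodule is $t\C[t]$, and a direct check shows $I_1.t=\be\la(t-1)\notin t\C[t]$ precisely when $\be\neq0$; thus $M$ is irreducible when $\be\neq0$, whereas $t\Omega(\la,0,0)$ is a proper submodule when $\al=\be=0$. The main obstacle I anticipate is the degree argument pinning down $q_1$ (and hence every $q_n$) together with the bookkeeping that simultaneously forces $C_D$ and $C_I$ to vanish; once the identity $I_n.f(t)=f(t-n)q_n(t)$ is in hand, the remaining steps are routine calculations.
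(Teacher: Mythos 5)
Your proof is correct. Note that the paper itself gives no proof of this theorem (it is recorded from \cite{CGHW} with the proof omitted), so the only internal comparison is with the paper's proof of its generalization to $\mcal{HV}\ot B$ (the classification of $U(d_0)$-free rank-one $\mcal{L_B}$-modules as $\Omega(\la,\al,\bs\mu,\be)$), and your argument follows essentially that same strategy: restrict to $Vir$ and invoke Theorem \ref{t5.1}, use centrality of $I_0,C_D,C_I$ together with the submodule structure of $\Omega(\la,\al)$ to get scalar actions, show $I_1.1$ is constant by a leading-coefficient argument on a functional equation (the paper's version does this for $[d_{-n},I_n].1$ by repeated differentiation, which is the same degree-drop idea), and then determine all constants and force $C_D$ and $C_I$ to vanish by bracket computations such as $[d_1,I_{-1}]=-I_0+2C_D$ and $[I_1,I_{-1}]=C_I$.
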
 \qed \\
	Now we define a $\mcal{L_B}$-module action on $\C[t]$ by the following actions:
	\begin{align}
		d_n \ot b^{\bf r}.f(t)=\bs \mu^{\bf r}\la^{n-|{\bf r}|} f(t-n) (t-n\al) \\
		I_n \ot b^{\bf r}.f(t)=\bs \mu^{\bf r}\la^{n-|{\bf r}|}\be f(t-n)	\\
		C\ot b^{\bf r}.f(t)=C_D\ot b^{\bf r}.f(t)=C_I\ot b^{\bf r}.f(t)=0,
	\end{align}
	for all ${\bf r} \in \Z^k_{\geq 0}, n \in \Z, \bs \mu \in \C^k,  f(t) \in \C[t]$ and for some $\al,\be  \in \C, \la \in \C \setminus \{0\}$. It is easy to see that this action define a $\mcal{L_B}$-module structure on $\C[t]$. We denote these modules by $\Omega(\la,\al,\bs{\mu},\beta)$.
	\begin{theorem}
		Let $M$ be a $U(\mcal{L_B})$-module such that restriction of $U(\mcal{L_B})$ to $U(d_0)$ is free of rank 1. Then $M \simeq \Omega(\la,\al,\bs{\mu},\beta)$, for some $\al, \be \in \C, {\bs\mu} \in \C^k, \la \in \C \setminus \{0\}$. Moreover $M$ is irreducible if and only if $ \al \neq 0$ or $\be \neq 0$.  If $\al=\be =0$ it has a submodule $t\Omega(\la,0,\bs \mu,0)$
	\end{theorem}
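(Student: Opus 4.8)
The plan is to mimic the structure of Theorem \ref{t5.2}, treating the full Heisenberg-Virasoro case by first restricting to the Virasoro subalgebra, then bootstrapping the action of the $I_n$ and the central elements. Since $\{d_n, C : n \in \Z\}$ generates a copy of $Vir$, restricting $M$ to $Vir \ot B$ immediately gives $M \simeq \Omega(\la,\al,\bs\mu)$ by Theorem \ref{t5.2}, which fixes the $d_n \ot b^{\bf r}$-action and forces $C \ot b^{\bf r}.f(t)=0$ for all ${\bf r}$. So the new content is entirely in pinning down the action of $I_n \ot b^{\bf r}$ and showing $C_D, C_I$ act as zero.

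The key steps, in order, would be: First I would show $I_0 \ot b$ acts as a scalar for each $b$. Since $I_0$ is central in $\mcal{HV}$ and $I_0 \ot b$ commutes with $d_0 \ot 1 = d_0$ (as $[I_0, d_0]=0$ and $b \cdot 1 = b$), the operator $I_0 \ot b$ is a $U(d_0)$-module endomorphism of a free rank-one module, hence given by multiplication by some polynomial; I would then use a relation such as $[d_n \ot c, I_{-n} \ot b].1$ or the central/commuting constraints to force that polynomial to be constant, exactly as the scalar argument for $C \ot b$ runs in Theorem \ref{t5.2}. Next, using $I_n \ot b^{\bf r}.f(t) = f(t-n)\, I_n \ot b^{\bf r}.1$ (which holds because $d_0$ shifts $t$), the whole problem reduces to computing $I_n \ot b^{\bf r}.1 \in \C[t]$. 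I would determine $I_1 \ot b.1$ from the relation $[d_1, I_1 \ot b].1 = I_2 \ot b.1$ together with $[d_{-1}, I_1 \ot b].1 = I_0 \ot b.1$, paralleling how $d_1 \ot b.1$ was extracted via $[d_1,d_1\ot b].1=0$ and $[d_{-1},d_1\ot b].1 = 2 d_0 \ot b.1$; the relation $[d_0 \ot b, I_n].1 = n I_n \ot b.1$ then propagates the $I_1 \ot b$-value to all $I_n \ot b$, yielding $I_n \ot b.1 = \be_b \la^{n-1} \cdot(\text{something})$ for a constant $\be_b$. Finally, the same two-layer induction as in Theorem \ref{t5.2} — first on powers $b^j$ via $[d_{-1}\ot b, I_1 \ot b^s].1$-type brackets, then on monomials $b^{\bf r}$ via $[d_0 \ot b_i^{r_i}, I_n \ot (\cdots)].1 = n I_n \ot (\cdots).1$ — extends this to all $b^{\bf r}$, giving $I_n \ot b^{\bf r}.1 = \bs\mu^{\bf r}\la^{n-|{\bf r}|}\be$. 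To kill $C_D$ and $C_I$, I would use $[d_n \ot b^{\bf r}, I_{-n}].1$ and $[I_n, I_{-n}\ot b^{\bf r}].1$ respectively, solving for the central outputs just as $C \ot b^{\bf r}.1 = 0$ was obtained from $[d_{-n}, d_n \ot b^{\bf r}].1$.

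For the irreducibility claim, I would again reduce to the known case: by Theorem \ref{t5.3}, a single-point module $\Omega(\la,\al,\be)$ is irreducible iff $\al \neq 0$ or $\be \neq 0$, and is reducible with submodule $t\Omega$ when $\al=\be=0$. The same polynomial-degree/divisibility argument applies here — any nonzero proper submodule is $d_0$-stable hence spanned by ideals of $\C[t]$, and the $\la$-scaled shift structure forces it to be $t\Omega(\la,0,\bs\mu,0)$ precisely when $\al=\be=0$, with irreducibility otherwise following because a lowest-degree element can be moved to a constant by the action.

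The main obstacle, I expect, is the coupling in the bracket $[d_n \ot b, I_{-n} \ot c].1 = -n I_0 \ot bc.1 + (n^2-n)C_D \ot bc.1$ (from relation (2.2)): the $C_D$-term contaminates the naive extraction of the $I_0$-scalar, so the scalar computation for $I_0 \ot b$ and the vanishing of $C_D \ot b^{\bf r}$ must be carried out together rather than sequentially, using at least two independent bracket relations (varying $n$) to separate the two unknowns — analogous to Claim 2 in Proposition \ref{p3.1}, where $j I_0 \ot f = (j^2+j) C_D \ot f$ for all $j$ forced both to vanish. Everything else is routine induction mirroring Theorem \ref{t5.2}.
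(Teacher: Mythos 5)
Your proposal is correct and takes essentially the same route as the paper's proof: restrict to $Vir \otimes B$ and apply Theorem \ref{t5.2}, use centrality of $I_0\otimes b$, $C_D\otimes b$, $C_I\otimes b$ to force them to act as scalars, show each $I_n\otimes b^{\bf r}.1$ is a constant polynomial via bracket relations with constant right-hand side plus a degree argument, and read off the irreducibility criterion from Theorem \ref{t5.3}. One cosmetic slip: the relation $[d_0\otimes b, I_n].1=nI_n\otimes b.1$ propagates from $I_n\otimes 1.1$ to $I_n\otimes b.1$ (so you first need $I_n.1$ for all $n$, obtained e.g.\ from $[d_{n-1},I_1].1=I_n.1$), not from $I_1\otimes b.1$ to $I_n\otimes b.1$; the needed brackets are of exactly the type your plan already deploys, so nothing breaks.
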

	\begin{proof}
		Consider $M$ as a $U(Vir)$ module, then by Theorem \ref{t5.1} we have $M\simeq \Omega(\la,\al)$. Fix some $b \in B$. Since $\mcal C$ is the center of $\mcal{L_B}$, for all $m \geq 1$, $\psi_m:\Omega(\la,\al) \to \Omega(\la,\al)$ defined by $\psi_m(f)=(\theta\ot b)^m.f$ is a $Vir$-module homomorphism, for $\theta \in \{C_D,I_0,C_I\}$. Now using the fact $\Omega(\la,\al)$ have only submodules $0, \Omega(\la,\al), t\C[t]$, we get $\theta \ot b$ acts as scalar on $M$ for all $b \in B$. Now consider $M$ as a $U(Vir \ot B)$-module which is free of rank 1 over $U(d_0)$. Then by Theorem \ref{t5.2} $M \simeq \Omega((\la,\al,\bs{\mu}).$ Now observe that 
		\begin{align}
			I_n \ot b.f(t)=	I_n \ot b.f(d_0).1 =f(t-n)I_n \ot b.1, \\
			C_D\ot b.f(t)=f(t)C_D\ot b.1\\
			C_I\ot b.f(t)=f(t)C_I\ot b.1,
		\end{align}
		for all $f(t) \in \C[t], b \in B, n \in \Z$. \\
		{\bf Case I:} Let $I_n.1=0$, for some $n \neq 0$ and $n \in \Z$. Then we consider the bracket operation $[d_i \ot b^{\bf r},I_n].1=nI_{n+i} \ot b^{\bf r}.1	$, which implies that $I_{n+i} \ot b^{\bf r}.1=0$, for all $i \neq -n$. Now it is easy to see using  the brackets of $\mcal{L_B}$ that $C_I \ot b^{\bf r}, C_D\ot b^{\bf r}, I_0 \ot b^{\bf r}$ acts trivially on $M$. Hence in this case $M \simeq \Omega(\la,\al, \bs \mu,0)$.\\
		{\bf Case II:} Let $I_n.1 =g(t) \neq 0$, for $n \neq 0$. Let $g(t)=\dis{\sum_{i=0}^{r}}a_it^i$ be a polynomial of degree $r$. Now consider the following:
		\begin{align*}
			(nI_0+(n^2-n)C_D).1 &=	[d_{-n},I_n].1  \cr
			&= \la^{-n}\dis{\sum_{i=0}^{r}}a_i(t+n)^i(t+n\al)-\la^{-n}(t-n+n\al)\dis{\sum_{i=0}^{r}}a_it^i \cr
			&=\la^{-n}\dis{\sum_{i=0}^{r}}a_i(t+n)^{i+1}+(n\al -n)\la^{-n}\dis{\sum_{i=0}^{r}}a_i(t+n)^i \cr
			&-\la^{-n}\dis{\sum_{i=0}^{r}}a_it^{i+1} -(n\al-n)\la^{-n}\dis{\sum_{i=0}^{r}}a_it^i
		\end{align*}
		Since LHS of above equation is constant, differentiation with respect to $t$ yields the value zero. Now differentiating RHS $r$-times we have 
		\begin{align*}
			0&=	(r+1)!a_r(t+n)+a_{r-1}r!+(n\al-n)a_rr!-a_r(r+1)!t-a_{r-1}r!-(n\al-n)a_rr!\cr
			&=na_r(r+1)!,
		\end{align*}
		here $r!$ represents the factorial of a non-negative integer.
		This implies that $g(t)$ is a polynomial of lower degree. Continuing this process we get that $g(t)$ is constant.\\
		Let $I_n.1=\beta_n$ for all $ n\in \Z, \be_n \in \C$. Now consider $[d_m,I_n].1=nI_{m+n}.1+(m^2+m)C_D.1$. This implies that  
		\begin{align}\label{a5.11}
			n\be_{n}\la^m= n\be_{m+n}+(m^2+m)C_D.1.
		\end{align} 
		In particular taking $m=-1$ we have, $\be_n=\la\be_{n-1}$ for all $n \neq 0$. Then from the equation (\ref{a5.11}) we have $C_D.1=0$ Therefore we have a recurrence relation $	n\be_{n}\la^m= n\be_{m+n}$, this implies that $\be_n =\la^n\be$ where $\be =\be_0$.\\
		Now consider the action $[d_0 \ot b^{\bf r},I_n].1=nI_n \ot b^{\bf r}.1$. From this we obtain that $I_n\ot b^{\bf r}=\bs \mu^{\bf r}\la^{n-|{\bf r}|}\be$, for all $n \neq 0$. Now consider  $[d_{-1},I_1 \ot b^{\bf r}].1=I_0\ot b^{\bf r}.1$ to find $I_0\ot b^{\bf r}.1= \bs \mu^{\bf r}\la^{-|\bf {r}|}\beta$. Now it is easy to see that $C_D \ot b^{\bf r}$ and $C_I \ot b^{\bf r} $ acts trivially on $M$. Hence $M\simeq \Omega(\la, \al, \bs \mu, \be)$. Rest part of the theorem follows from Theorem \ref{t5.3}.

	\end{proof}
	\begin{theorem}\label{t5.5}
		$\Omega(\la,\al,\bs\mu,\be) \simeq \Omega(\la',\al',\bs\mu',\be')$ if and only if $\la=\la',\al=\al',\bs\mu=\bs\mu',\be=\be'.$
	\end{theorem}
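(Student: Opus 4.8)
The plan is to prove the non-trivial forward direction, since the backward direction is immediate: identical parameters clearly yield identical module actions, hence isomorphic modules. So I assume there is a $\mcal{L_B}$-module isomorphism $\Phi: \Omega(\la,\al,\bs\mu,\be) \to \Omega(\la',\al',\bs\mu',\be')$ and aim to recover each parameter from intrinsic data of the module. First I would restrict $\Phi$ to a $Vir$-module isomorphism $\Omega(\la,\al) \to \Omega(\la',\al')$; by the uniqueness already recorded in Theorem \ref{t5.1} (the classification of $U(d_0)$-free $Vir$-modules of rank $1$), one expects $\la=\la'$ and $\al=\al'$ to drop out. Concretely, since both spaces are $\C[t]$ with $d_0$ acting as multiplication by $t$, and $\Phi$ is $U(d_0)$-linear, $\Phi$ must commute with multiplication by $t$, so $\Phi$ is itself multiplication by a fixed polynomial $p(t)$; moreover $\Phi$ sends the cyclic generator $1$ to $p(t)\cdot 1$, and free-ness of rank $1$ forces $p(t)$ to be a nonzero constant, so after rescaling $\Phi$ is the identity map on $\C[t]$.

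With $\Phi = \mathrm{id}$, the two module structures on $\C[t]$ must literally coincide on every generator. I would then read off the parameters by evaluating the defining actions on the constant polynomial $1 \in \C[t]$. Comparing $d_n.1$ in the two structures gives $\la^n(t-n\al) = \la'^n(t-n\al')$ for all $n \in \Z$; matching the leading coefficient in $t$ yields $\la^n = \la'^n$ for all $n$, hence $\la = \la'$ (using $n=1$), and matching the constant term then forces $n\al = n\al'$, so $\al = \al'$. Next, comparing $I_n.1$ gives $\be\la^n = \be'\la'^n$ for all $n$; taking $n=0$ yields $\be = \be'$ directly. Finally, comparing $d_n \ot b^{\bf r}.1$ gives $\bs\mu^{\bf r}\la^{n-|{\bf r}|}(t-n\al) = \bs\mu'^{\bf r}\la'^{n-|{\bf r}|}(t-n\al')$; since $\la=\la'$ and $\al=\al'$ are already known, this collapses to $\bs\mu^{\bf r} = \bs\mu'^{\bf r}$ for all ${\bf r} \in \Z_{\geq 0}^k$, and specializing to the multi-indices ${\bf r} = e_i$ (the $i$-th standard basis vector) gives $\mu_i = \mu_i'$ for each $i$, i.e. $\bs\mu = \bs\mu'$.

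The one genuinely delicate point is the very first step: establishing that any isomorphism is forced to be the identity (up to scalar) as a map of the underlying space $\C[t]$. The argument hinges on the fact that $\Phi$, being $U(d_0)$-linear and $d_0$ acting as multiplication by $t$, must be a $\C[t]$-module endomorphism of $\C[t]$, and every such endomorphism is multiplication by an element of $\C[t]$; rank-one freeness then pins this down to a nonzero scalar. Once this normalization is in hand the remaining comparisons are routine polynomial identities, so the main obstacle is purely in justifying that the change of basis cannot be anything exotic. After that, the parameter extraction above completes the proof.
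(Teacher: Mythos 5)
Your proof is correct and takes essentially the same route as the paper: both arguments show that $U(d_0)$-linearity forces the isomorphism to be multiplication by a polynomial in $t$, which invertibility pins down to a nonzero scalar, and then recover all parameters by comparing the module actions on the generator $1$. The only cosmetic difference is that you re-derive $\la=\la'$ and $\al=\al'$ directly from the comparison $\la^n(t-n\al)=\la'^n(t-n\al')$, whereas the paper quotes the Virasoro-level uniqueness from \cite{LZ}; this changes nothing of substance.
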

	\begin{proof}
		Let $\phi: 	\Omega(\la,\al,\bs\mu,\be) \to \Omega(\la',\al',\bs\mu',\be')$ be the isomorphism with its inverse $\phi^{-1}$. Consider the restriction of $\phi$ to $Vir$-module, then from \cite{LZ} we have $\la=\la',\al=\al'.$ Let $\bs \mu=(\mu_1,\dots,\mu_k)$ and $\bs \mu'=(\mu_1',\dots,\mu_k')$. Note that the following relations holds:
		\begin{align}
			\phi(f(t))=\phi(f(d_0)1)=f(t)\phi(1) \\
			\phi^{-1}(f(t))=\phi^{-1}(f(d_0)1)=f(t)\phi^{-1}(1).
		\end{align}
		In particular we have $\phi^{-1}(\phi(1))=\phi(1)\phi^{-1}(1)=1$, which implies that $\phi(1) $ is a non-zero scalar. Now consider
		\begin{align*}
			&\phi(d_1 \ot b_i.1)=d_1\ot b_i.\phi(1) \cr
			\implies &\mu_i(t-\al)\phi(1)=\mu_i'(t-\al')\phi(1)\cr
			\implies & \mu_i=\mu_i', 
		\end{align*}
		for all $1 \leq i \leq k$, using the fact that $\al=\al'$. Again consider the equation $\phi(I_n.1)=I_n.\phi(1)$ and deduce that $\be =\be'$. This completes the proof.
		
	\end{proof}
	\begin{remark}
			
			It is easy to observe from the proof of Theorem \ref{t5.5} that $Vir \ot B$ modules $\Omega(\la,\al,\bs\mu) \simeq \Omega(\la',\al',\bs\mu')$ if and only if $\la=\la',\al=\al',\bs\mu=\bs\mu'.$
			
		\end{remark}
		
		\vspace{2cm}
		{\bf Acknowledgments:} The author would like to thank Dr. Sachin S. Sharma for his manuscript regarding proof of uniformly bounded modules for $\mcal{L_B}$ modulo its center.

	\end{document}